\newcommand{\smat}[1]{ \left[\begin{smallmatrix} #1 \end{smallmatrix}\right]}
\def\setC{\,\mathbb{C}}
\def\setR{\mathbb{R}}
\newtheorem{rumatemppu}{Example}
\newenvironment{example}{\begin{rumatemppu}\rm}{\end{rumatemppu}}
\title{Fastest quotient iteration 
 with variational principles 
for self-adjoint eigenvalue problems}
\author{
Marko Huhtanen\thanks{
Faculty of Information Technology and Electrical Engineering,
University of Oulu,
90570 Oulu 57,
Finland,
({\tt Marko.Huhtanen@aalto.fi}).}
\and
Vesa Kotila \thanks{
Faculty of Information Technology and Electrical Engineering,
University of Oulu,
90570 Oulu 57,
Finland,
({\tt Vesa.Kotila@oulu.fi}).
}
\and
Pauliina Uusitalo \thanks{
Faculty of Information Technology and Electrical Engineering,
University of Oulu,
90570 Oulu 57,
Finland,
({\tt Pauliina.Uusitalo@oulu.fi}).}
}
\begin{document}
\maketitle
\begin{abstract} For the generalized eigenvalue problem, a quotient function is devised for estimating 
eigenvalues in terms of an approximate eigenvector. 
This gives rise to an infinite family of 
quotients, all entirely arguable to be used in estimation. Although the Rayleigh quotient is among them, one can suggest using it only in an auxiliary manner for choosing the quotient
for near optimal results.  In normal eigenvalue problems, for any approximate eigenvector, 
there always exists a "perfect" quotient exactly giving an eigenvalue. 
For practical estimates in the self-adjoint case, an approximate midpoint of the spectrum is a good choice for 
reformulating the eigenvalue problem yielding apparently 
the fastest quotient iterative method there exists.
No distinction is made between estimating extreme or interior eigenvalues. 
Preconditioning from the left results in changing the inner-product 
and affects the estimates accordingly. 
Preconditioning from the right preserves self-adjointness and can hence be performed without any restrictions. 
It is used in  variational methods for optimally computing approximate eigenvectors. 
\end{abstract}

\begin{keywords} quotient function, self-adjoint eigenvalue problem, quotient iteration,  variational principles, best quotient,
midpoint of the spectrum
\end{keywords}

\begin{AMS}
65F15, 15A22, 47A25
\end{AMS}

\pagestyle{myheadings}
\thispagestyle{plain}

\markboth{M. HUHTANEN, V. KOTILA AND P. UUSITALO
}{FASTEST QUOTIENT ITERATION}

\section{Introduction}
Based on inspecting all conceivable quotients, this paper is concerned with the computation of best eigenvalue estimates for a large nonsingular\footnote{Nonsingular means there are nonsingular linear
combinations of the matrices $M$ and $N$.} self-adjoint eigenvalue problem
\begin{equation}\label{gen}
Mx=\lambda Nx,
\end{equation}
with matrices $M,N\in \setC^{n \times n}$. Large means, typically, that only a few specific eigenpairs are meant to be computed. Self-adjoint means that there
is an inner-product 
$(x,y)_P=(Px,y)=y^*Px$ available,  defined in terms of a
positive definite matrix $P\in \setC^{n \times n}$, such that
\begin{equation}\label{sadjo}
(Mx,Nx)_P\;
 \mbox{is real for any}\, x \in \setC^n,
 \end{equation}
i.e., $N^*PM$ is a Hermitian matrix  
\cite{HOOKOO,HNEAR}.\footnote{This should not be confused with
the classical notion of Hermitian matrix pencil $M-\lambda N$ involving two
Hermitian matrices $M$ and $N$. 
For Hermitian matrix pencils, see
\cite{PA,PASYM}.} These assumptions are met in many realistic applications.
The standard Hermitian eigenvalue problem corresponds to having $P=I$ and
$N=I$. Non-Hermitian quantum mechanics \cite{BB} is  concerned with the case $P\not=I$.
In numerical computations $P\not= I$ usually appears in two ways.
First, the inner-product may depend on the chosen discretization when using,
e.g., the finite element method (FEM).
Second, in preconditioning the eigenvalue problem from the left, 
the inner-product gets changed in computing estimates; see \eqref{prer}. 
(For computational aspects of the eigenvalue problem, see, e.g., \cite{PA,SAAD,IKRA} 
and the large number of references therein.)
Thus, assume $x\in \setC^n$ an approximate eigenvector is available
and the task is to estimate eigenvalues.
Then, in the self-adjoint case, the optimal quotients 
\begin{equation}\label{qtti}
{\rm oq}_{M,N}(x)=\frac{(Mx,Nx)_P}{|(Mx,Nx)_P|}
\frac{\|Mx\|_P}{\|Nx\|_P}   
\end{equation}
and the Rayleigh quotients
\begin{equation}\label{ray}
{\rm rq}_{M,N}(x)=\frac{(Mx,Nx)_P}{(Nx,Nx)_P}
\end{equation}
are real such that respective quotient iterations attain cubic speed of convergence \cite{HUKO}.
Let $\mu \in \setC$. For Rayleigh quotients we have
${\rm rq}_{M-\mu N,N}(x)={\rm rq}_{M,N}(x)- \mu$ while  
optimal quotients behave non-linearly in such translations.
This non-linearity turns out not to be a disruption.
That is, to simultaneously cover all reasonable eigenvalue estimates,
define the quotient function
\begin{equation}\label{funa}
\mu \longmapsto {\rm oq}_{M-\mu N,N}(x)+\mu 
\end{equation}
on $\setC$. 
First and foremost, the entire spectrum can be recovered 
with the quotient function 
if both $x$ and $\mu$ are allowed vary \cite[Theorem 3.2]{HOOKOO}.
Second, it yields the field of values at 
infinity \cite[Theorem 3.3]{HOOKOO} making, intriguingly, the Rayleigh quotients redundant as
\begin{equation}\label{infy}
\lim_{\mu \rightarrow \infty}
{\rm oq}_{M-\mu N,N}(x)+\mu=
{\rm rq}_{M,N}(x).
\end{equation}
In the self-adjoint case, by 
using the growth properties of the quotient function  \eqref{funa} when $\mu\in\setR$,  
superior eigenvalue estimates over the classical ones are derived. 
For classical estimates, see the concise description \cite{PLU} and references 
therein and \cite[p. 75]{PA}.\footnote{This is in contrary to what is taught in textbooks on numerical methods. Typically the Rayleigh quotients are claimed to be the "best choice".}
These quotients are then used to have the fastest quotient iteration
we are aware of, beating also the Rayleigh quotient iteration. To start the iteration with 
a high quality starting vector to immediately attain a cubic speed of convergence, 
preconditioned variational methods for optimally generating approximate eigenvectors are devised.  
Guaranteed estimates are obtained in terms of a shrinking sequence of intervals, each containing an eigenvalue, converging to an eigenvalue.
 
The closure of the image of the quotient function \eqref{funa}
is denoted by ${\rm Imqf}_{M,N}(x)$.
It is shown to be the disc of radius $\frac{\|Mx-{\rm rq}_{M,N}(x) Nx\|_P}{\|Nx\|_P}$
centred  at ${\rm rq}_{M,N}(x)$.
Consequently,  to estimate eigenvalues,
the Rayleigh quotient can be interpreted as being the average of
all reasonable quotients associated with an approximate eigenvector 
$x\in \setC^n$.
If the eigenvalue problem is normal (see Definition \ref{norsu}), then ${\rm Imqf}_{M,N}(x)$ contains an eigenvalue as
there exists a "perfect" quotient \eqref{funa} giving
an eigenvalue exactly with an appropriate choice of $\mu$; see Corollary
\ref{inklu}.  Conversely, and very discouragingly,  if the eigenvalue problem is not normal, there may not exist any  reasonable quotient to be used in estimation.
To estimate an extreme eigenvalue in the self-adjoint case,
rather than taking the Rayleigh quotient, a simple rule 
can be given based on the growth properties of the quotient function as follows.
For the smallest eigenvalue $\lambda_1$ (resp. the largest $\lambda_n$) the parameter $\mu$ should
satisfy $\mu> {\rm rq}_{M,N}(x)$ 
(resp. $\mu< {\rm rq}_{M,N}(x)$). 
The issue arises, how close to ${\rm rq}_{M,N}(x)$ 
should  $\mu$ be brought
to beat the Rayleigh
quotient by a good margin.
Improved estimates can be guaranteed with $\mu$ being the midpoint of the spectrum, i.e.,  
$|\lambda_1-\mu|=|\lambda_n-\mu|$. Then
\begin{equation}\label{erwr}
(M-\mu N)x=\lambda Nx
\end{equation}
is a natural formulation to produce estimates.
Of course, this condition for $\mu$ can be satisfied only roughly, based on incomplete information about the interval containing the eigenvalues.  
This is sufficient for practical purposes, though. Most notably, estimates in positive semi-definite problems  can be improved without any additional information; see Corollary \ref{pual} and Algorithm \ref{alg:pyor}.
Once an approximate midpoint $\mu$ has been set, every subsequent quotient iteration will be speeded-up due to these improved quotients. This yields a quotient iteration
which we argue to be fastest there exists; see Algorithm  \ref{alg:factspar}. For the background and 
current use of the Rayleigh quotient iteration, see  \cite{PA} and  \cite[p.  194]{SAAD}. See also \cite{WUSS} (in standard eigenvalue problems) and its Jacobi-Davidson variants derived by using Newton's method, see \cite[pp. 206--209]{SAAD} and references therein. (Bear in ming that the Jacobi-Davidson methods cannot compete with the Rayleigh quotient iteration unless exact inversion are performed; see \cite[p. 604]{SBFV}, \cite{FS} and \cite[p. 218]{SAAD}.) 
For interior eigenvalues, near a given point $\zeta \in \setR$, the approach is the same once the eigenvalue problem
is reformulated as
\begin{equation}\label{ilahi}
Nx=\lambda (M-\zeta N)x,
\end{equation} 
converting the associated interior eigenvalues into extreme.
In particular, computationally there is no distinction between interior and exterior eigenvalues.  For interior eigenvalue problems, e.g.,  in quantum physics, see \cite{ES,ELS} and references therein.

All the preceding estimates with quotients require high quality
approximate eigenvectors.
Generation of approximate eigenvectors is actually the most challenging part of any process to estimate eigenvalues. For starting the
Rayleigh quotient iteration to attain convergence towards a desired eigenvalue,
this dilemma is well-known; 
see \cite[pp. 84--85]{PA}.
For a numerical experiment illustrating this unpredictablity of convergence,
see \cite[Example 3.1.]{HUKO}. Also, rapidly
attaining a cubic speed of convergence requires
a good starting vector.
In the self-adjoint case we take the quotient function minus $\mu$ and vary $x$ by imposing the optimality condition
\begin{equation}\label{aminni}
\min_{x \in \setC^n}\frac{\|(M- \mu N)x\|_P^2}{\|Nx\|_P^2}
\end{equation}
whose solution gives an eigenvector corresponding to the eigenvalue nearest to $\mu$. 
To make this practical, restrict $x\in \setC^n$ to belong to
small dimensional subspaces generated by a preconditioned descent method.
That is, since self-adjointness is preserved in applications by an invertible 
$Z\in \setC^{n \times n}$ from the right, the speed of descent can be affected$/$increased by considering 
\begin{equation}\label{aminnai}
\min_{x \in \setC^n}\frac{\|(M- \mu N)Zx\|_P^2}{\|NZx\|_P^2}
\end{equation}
after preconditioning. We suggest monitoring the evolution of estimates in terms of ${\rm Imqf}_{M,N}(x)$, for guaranteed inclusion regions. A shrinking sequence of intervals,
each containing an eigenvalue, converging to an eigenvalue is obtained.
From each interval it is possible to pick a quotient for an eigenvalue estimate. Then swapping the optimal quotient iteration Algorithm  \ref{alg:factspar}, we expect convergence in two or at most three iterations.

The paper is organized as follows. In Section \ref{Sec2} the quotient function is introduced. Its image is shown to contain an eigenvalue in normal eigenvalue problems while in general this may not hold without changing the inner-product. In Section \ref{Sec3} the growth properties of the quotient function are inspected in the self-adjoint case. Rules are given to have high quality quotients, resulting in the fastest quotient iteration we are aware of.  
The issue of providing good starting vectors for quotient iterations is adressed in Section \ref{Sec4}. A preconditioning 
strategy is described. In Section \ref{Sec5} the problem of estimating several eigenvalues is addressed. Numerical experiments, concerned with realistic and tough problems in computational physics, are conducted in Section \ref{Sec6}.

\section{Quotient function for eigenvalue estimation}\label{Sec2}
Practically any algorithm for computing eigenvalues, all or just a few, relies on estimating extreme eigenvalues  using approximate eigenvectors. If the eigenvalue problem is standard, the power method combined with computing  Rayleigh quotients \eqref{ray}
for the largest eigenvalue is a fundamental example of this.
These ideas are then used in a repetitive manner for other eigenvalues. Rayleigh quotients were initially used, primarily by physicists, to estimate smallest eigenvalues of 
a self-adjoint positive definite operator $M$. 
Typically this took place with the Laplace operator  \cite{RAY, KATO,MA,BPAR}.  
In the positive definite case the Rayleigh quotients are actually optimal quotients \eqref{qtti} without having to take the limit \eqref{infy}.
Namely, with an approximate eigenvector $x$, take the square root $M^{1/2}$ of $M$ to have
\begin{equation}\label{sqr}
	{\rm rq}_{M,I}(x)
	=  {\rm oq}_{M^{1/2},I}(x)^2.
\end{equation}
That is, for a positive definite operator $M$,  the Rayleigh quotients 
are optimal quotients of $M^{1/2}$ squared. For further estimates in this case, see Example \ref{ubo} below.

Besides Rayleigh quotients, there are many other options to produce
estimates in terms of  
an approximate eigenvector. The quotient function defined as follows provides a way to simultaneously collect all conceivable quotients for estimation.
For the rationale behind its derivation in the case $\mu=0$, see \cite[Section 2.1]{HUKO} and \cite{HNEAR}.
See also Appendix A.

\begin{definition}
Let $M,N\in \setC^{n \times n}$. For a positive definite 
$P\in \setC^{n\times n}$ determining the inner-product, 
the quotient function of the eigenvalue problem 
\eqref{infy} at $x \in\setC^n$  with $Nx\not=0$ is  
$$\mu \longmapsto 
\frac{((M-\mu N)x,Nx)_P}{|((M-\mu N)x,Nx)_P|}
\frac{\|(M-\mu N)x\|_P}{\|Nx\|_P} +\mu,$$
defined for  $\mu \in \setC \smallsetminus \{{\rm rq}_{M,N}(x)\}$.
\end{definition}

Like with the Rayleigh quotient, two matrix-vector products need to be performed to evaluate the quotient function, plus the cost of applying $P$. In practice this cost depends on the discretization determining, e.g., the sparsity of $P$.

\smallskip

\begin{example}\label{pde}
For a common example, a discretization of a PDE using the finite element method (FEM) often leads to an eigenvalue problem \eqref{gen}
with $M$ and $N$ Hermitian such that, given blockwise, we have
\begin{equation}\label{sar}
        \left[ \begin{array}{cc}
                M_{11}&M_{12}\\
                M_{12}^*&M_{22}
        \end{array} \right]\left[ \begin{array}{c}
        x_{1}\\
        x_{2}
        \end{array} \right]
        =\lambda
\left[ \begin{array}{cc}
        0&0\\
        0&N_{22}
\end{array} \right]\left[ \begin{array}{c}
x_{1}\\
x_{2}
\end{array} \right];
\end{equation}
see \cite{BEBO}. If $M_{11}$ is invertible
and $N_{22}$ positive definite,
an inner-product can be constructed by setting
$Z=\smat{I&0\\-M_{12}^*M_{11}^{-1}&I}$.
Then $$P=Z^*\left[ \begin{array}{cc}
I&0\\ 0&N_{22}^{-1}
\end{array} \right]
Z$$ yields an inner-product such that \eqref{sadjo} holds.  
\end{example}

\begin{example} In the preceding PDE discretization example, 
an often encountered case is $N=N_{22}$  being the mass matrix. (This takes place with  the Laplacian eigenvalue problem, for instance.)
The cost of applying $P=N^{-1}$ requires inversions
either by invoking sparse direct solvers or using iterative methods.
Then, in this inner-product, the Rayleigh quotient \eqref{ray} equals $$\frac{(Mx,x)}{(Nx,x)}$$
 which is typically called  {\em the} Rayleigh quotient; see, e.g., \cite{PA}. It is, however, just the 
appearance of the Rayleigh quotient \eqref{ray}  using this particular inner-product.   (If there is one, then there is an infinite number of
eligible inner-products available \cite{HNEAR}.)
\end{example} 
 
\smallskip

The image of the quotient function admits the following characterization.  

\begin{theorem}\label{pallero} 
The closure of the image of the quotient function is the 
disc of radius
$\frac{\|Mx-{\rm rq}_{M,N}(x) Nx\|_P}{\|Nx\|_P}$ centred 
at ${\rm rq}_{M,N}(x)$.
The limit of the quotient function at infinity is
 ${\rm rq}_{M,N}(x)$ while 
the limit at ${\rm rq}_{M,N}(x)$ is the 
circle of radius 
$\frac{\|Mx-{\rm rq}_{M,N}(x) Nx\|_P}{\|Nx\|_P}$ centred 
at ${\rm rq}_{M,N}(x)$.
\end{theorem}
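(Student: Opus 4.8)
The plan is to fix $x \in \setC^n$ with $Nx \neq 0$ and to collapse the quotient function to a one-real-parameter expression by an orthogonal decomposition in the $P$-inner product. Write $r := {\rm rq}_{M,N}(x)$. The self-adjointness hypothesis \eqref{sadjo} makes $(Mx,Nx)_P$ real, and $\|Nx\|_P^2 > 0$, so $r \in \setR$. Set $v := Mx - r Nx$; by the defining property of the Rayleigh quotient, $(v,Nx)_P = (Mx,Nx)_P - r\|Nx\|_P^2 = 0$, i.e.\ $v \perp_P Nx$. Since $(M-\mu N)x = v + (r-\mu)Nx$, linearity of $(\cdot,\cdot)_P$ in its first slot gives $((M-\mu N)x, Nx)_P = (r-\mu)\|Nx\|_P^2$, and the Pythagorean identity gives $\|(M-\mu N)x\|_P^2 = \|v\|_P^2 + |r-\mu|^2\|Nx\|_P^2$. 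Observe that $\|v\|_P/\|Nx\|_P = \|Mx - {\rm rq}_{M,N}(x)Nx\|_P/\|Nx\|_P$ is precisely the radius named in the theorem.

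Next, substituting both identities into the definition of the quotient function and abbreviating $\rho := \|v\|_P/\|Nx\|_P$, $t := |r-\mu| \in (0,\infty)$, $\omega := (r-\mu)/|r-\mu|$ on the unit circle (so $\mu = r - t\omega$), the unimodular prefactor becomes $\omega$, the ratio of norms becomes $\sqrt{\rho^2 + t^2}$, and the whole expression reduces to
\[
q(\mu) \;=\; \omega\sqrt{\rho^2 + t^2} + \mu \;=\; r + \omega\bigl(\sqrt{\rho^2 + t^2} - t\bigr).
\]
Hence $q(\mu) - r$ has argument $\arg \omega$ and modulus $\sqrt{\rho^2 + t^2} - t = \rho^2/(\sqrt{\rho^2 + t^2} + t)$.

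The conclusions then follow from elementary one-variable analysis. As $\mu$ ranges over $\setC \smallsetminus \{r\}$, the pair $(t,\omega)$ ranges freely over $(0,\infty) \times \{\omega\in\setC:|\omega|=1\}$; and $t \mapsto \sqrt{\rho^2 + t^2} - t$ is continuous and strictly decreasing from $\rho$ (limit as $t \to 0^+$) down to $0$ (limit as $t \to \infty$), hence maps $(0,\infty)$ bijectively onto $(0,\rho)$, with the degenerate case $\rho = 0$ (an exact eigenvector) giving the single value $r$. Therefore the image of $q$ is the punctured open disc $\{\, z : 0 < |z - r| < \rho \,\}$, whose closure is the closed disc of radius $\rho$ centred at ${\rm rq}_{M,N}(x)$. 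For the limiting statements: $|\mu| \to \infty$ forces $t \to \infty$ and hence $q(\mu) \to r = {\rm rq}_{M,N}(x)$; and $\mu \to r$ forces $t \to 0$ and hence $|q(\mu) - r| \to \rho$, while $\omega$ can be steered to any prescribed point of the unit circle, so the set of cluster values of $q$ at ${\rm rq}_{M,N}(x)$ is exactly the circle of radius $\rho$ about it.

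I do not expect a genuine obstacle here: the proof is essentially the single identity $(M-\mu N)x = (Mx - {\rm rq}_{M,N}(x)Nx) + ({\rm rq}_{M,N}(x) - \mu)Nx$ followed by the calculus of $t \mapsto \sqrt{\rho^2 + t^2} - t$. The two points to keep straight are the convention that $(\cdot,\cdot)_P$ is linear in its first argument (so that the numerator is $(r-\mu)\|Nx\|_P^2$ and not its conjugate), and the one essential use of self-adjointness \eqref{sadjo}, namely that $r$ is real, which is what lets the shift $\mu = r - t\omega$ combine cleanly with the phase factor $\omega$ to give $q(\mu) - r = \omega(\sqrt{\rho^2+t^2}-t)$.
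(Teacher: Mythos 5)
Your proof is correct and follows essentially the same route as the paper's: shift $\mu={\rm rq}_{M,N}(x)+\hat\mu$, use the $P$-orthogonality of $Mx-{\rm rq}_{M,N}(x)Nx$ against $Nx$ to rewrite the quotient function as ${\rm rq}_{M,N}(x)+\omega\bigl(\sqrt{\rho^{2}+t^{2}}-t\bigr)$, and then analyze the strictly decreasing radius function $t\mapsto\sqrt{\rho^{2}+t^{2}}-t$ to get the punctured disc, the limit ${\rm rq}_{M,N}(x)$ at infinity, and the circle of cluster values at ${\rm rq}_{M,N}(x)$. One remark: your appeal to self-adjointness to make ${\rm rq}_{M,N}(x)$ real is superfluous --- the identity $q(\mu)-{\rm rq}_{M,N}(x)=\omega\bigl(\sqrt{\rho^{2}+t^{2}}-t\bigr)$ holds verbatim for a complex Rayleigh quotient, and the theorem (like the paper's proof) carries no self-adjointness hypothesis.
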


\begin{proof} If $x$ is an eigenvector, then the claim is true with the quotient function being constant, i.e., the respective eigenvalue. 

So let us assume $x$ is not an eigenvector.
Since $Nx\not=0$, the quotient function
can be discontinuous only when the first fraction is discontinuous. This takes place at its zeroes. There is just one, so let us concentrate on 
${\rm rq}_{M,N}(x)$.
To analyze this discontinuity,
denote $\mu={\rm rq}_{M,N}(x)+ \hat{\mu}$. Then
$$\frac{((M-\mu N)x,Nx)_P}{|((M-\mu N)x,Nx)_P|}=
-\frac{\hat{\mu}}{|\hat{\mu}|}$$
and 
$$\frac{\|(M-\mu N)x\|_P}{\|Nx\|_P}=
\frac{\sqrt{\|Mx-{\rm rq}_{M,N}(x)Nx\|_P^2+
|\hat{\mu}|^2\|Nx\|_P^2}}{\|Nx\|_P}.$$
So the quotient function takes the form
\begin{equation}\label{sade}
\mu \longmapsto
-\frac{\hat{\mu}}{|\hat{\mu}|}
\sqrt{\frac{\|Mx-{\rm rq}_{M,N}(x)Nx\|_P^2}{\|Nx\|_P^2}+
|\hat{\mu}|^2} +{\rm rq}_{M,N}(x)+ \hat{\mu}
\end{equation}
With $\hat{\mu}$ given in the polar form
$\hat{\mu}=re^{i\theta}$ we have, with $r>0$ fixed, 
a circle of radius
\begin{equation}\label{ratio}
\sqrt{\frac{\|Mx-{\rm rq}_{M,N}(x)Nx\|_P^2}{\|Nx\|_P^2}+
r^2}-r
\end{equation}
centred at ${\rm rq}_{M,N}(x).$ Taking the derivative with
respect to $r$ shows that this radius is a decreasing function of $r$.

If $\hat{\mu} \rightarrow 0$, then the image of the quotient function approaches the circle of radius
$\frac{\|Mx-{\rm rq}_{M,N}(x) Nx\|_P}{\|Nx\|_P}$ centred 
at ${\rm rq}_{M,N}(x)$.
Hence continuity takes place only if $x$ is an eigenvector.
Factoring $|\hat{\mu}|$ outside the square root yields that the limit at infinity is $-\hat{\mu}+\mu=
{\rm rq}_{M,N}(x)$.
\end{proof}

We denote the closure of the image of the quotient function by
$${\rm Imqf}_{M,N}(x).$$
This can be interpreted as being a Gershgorin disc. A simple formulation of this is
as follows.

\begin{corollary}\label{gers} Let $N=I$. Then  
${\rm Imqf}_{M,I}(x)$
is the Gershgorin disc 
of the first column of the Arnoldi method executed with $M$ at $x$.
\end{corollary}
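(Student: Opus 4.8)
The plan is to unwind the definition of the Arnoldi method at $x$ and match the resulting $2\times 2$ leading block of the Hessenberg matrix against the description of ${\rm Imqf}_{M,I}(x)$ furnished by Theorem \ref{pallero}. Recall that the Arnoldi process started at $x$ (normalized, say $\|x\|_P=1$) first sets $v_1 = x/\|x\|_P$, then forms $Mv_1$, orthogonalizes against $v_1$ to get $h_{11} = (Mv_1,v_1)_P$ and the residual $w = Mv_1 - h_{11}v_1$ with $h_{21} = \|w\|_P$. Hence the first column of the Arnoldi Hessenberg matrix is $(h_{11},h_{21},0,\dots,0)^{\mathsf T}$, and its Gershgorin disc is the disc centred at $h_{11}$ of radius $h_{21}$ (the first column has only the one off-diagonal entry $h_{21}$, sitting in row $2$; here we use the convention that the Gershgorin disc of a column is centred at the diagonal entry with radius the sum of the moduli of the other entries in that column).

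First I would identify $h_{11}$ with the Rayleigh quotient: since $N=I$, we have $(Mv_1,v_1)_P = (Mx,x)_P/\|x\|_P^2 = {\rm rq}_{M,I}(x)$, which is the centre claimed by Theorem \ref{pallero}. Next I would identify the radius $h_{21} = \|Mv_1 - {\rm rq}_{M,I}(x)v_1\|_P$; scaling back by $\|x\|_P$ gives $h_{21} = \|Mx - {\rm rq}_{M,I}(x)x\|_P / \|x\|_P$, which is exactly the radius $\frac{\|Mx - {\rm rq}_{M,I}(x)x\|_P}{\|x\|_P}$ appearing in Theorem \ref{pallero} with $N=I$ (so $\|Nx\|_P = \|x\|_P$). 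Therefore the Gershgorin disc of the first Arnoldi column coincides with the disc Theorem \ref{pallero} identifies as ${\rm Imqf}_{M,I}(x)$, which completes the argument. The degenerate case $Mx \in \mathrm{span}\{x\}$, i.e. $x$ an eigenvector, makes both discs collapse to the single point ${\rm rq}_{M,I}(x)=\lambda$, consistent with the constant-quotient-function case of Theorem \ref{pallero}.

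There is no serious obstacle here; the corollary is essentially a dictionary translation. The one point requiring care is purely bookkeeping: the Arnoldi recurrence is usually written in the Euclidean inner product, so I would state explicitly that the process is run in the $P$-inner product (consistent with the ``$P$-Arnoldi'' viewpoint, equivalently ordinary Arnoldi applied to $P^{1/2}MP^{-1/2}$ started at $P^{1/2}x$), and that ``Gershgorin disc of a column'' means centre at the diagonal entry, radius the sum of moduli of the off-diagonal entries of that column — a column version of the classical (row) Gershgorin statement, valid since $M$ and $M^{\mathsf T}$ share eigenvalues. With those conventions fixed, the identification of centre and radius above is immediate from the definition of the quotient function and Theorem \ref{pallero}.
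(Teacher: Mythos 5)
Your proposal is correct and follows exactly the route the paper intends: the corollary is stated without proof as an immediate consequence of Theorem \ref{pallero}, and your identification of the first Arnoldi column entries $h_{11}=(Mv_1,v_1)_P={\rm rq}_{M,I}(x)$ and $h_{21}=\|Mx-{\rm rq}_{M,I}(x)x\|_P/\|x\|_P$ with the centre and radius of that theorem's disc is precisely the missing dictionary translation. Your explicit care about running Arnoldi in the $P$-inner product and about the column-version Gershgorin convention is appropriate bookkeeping, not a deviation from the paper's argument.
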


The purpose of Gershgorin discs is to provide information about the location of the eigenvalues. 
Regarding ${\rm Imqf}_{M,N}(x)$, normal eigenvalue problems admit the best results in eigenvalue estimation. 
To this end, recall that $P^{-1}M^*P$ is the adjoint of a matrix $M\in \setC^{n \times n}$ 
with respect to the inner-product $(\cdot,\cdot)_P$, where $M^*$ denotes the Hermitian transpose of $M$. 
The matrix $M$ is normal if $M$ commutes with its adjoint. 


\smallskip

\begin{example}\label{eksu} 
	If $P$ can be freely chosen, then the 
	probability of having a normal matrix is one. To see this, 
assume $M\in \setC^{n \times n}$ is diagonalizable as
$M=X\Lambda X^{-1}$. If $X^*=U\hat{P}$ is the polar decomposition of
$X^*$, then $P=\hat{P}^{-2}$ yields a normalizing inner-product
for $M$, i.e., $M$ commutes with $P^{-1}M^*P$.
The task of finding $P$, which is not unique, has been addressed in \cite{HNEAR}
in the self-adjoint case.
\end{example}

\smallskip

For eigenvalue problems \eqref{gen}, normality with respect to a given inner-product is defined as follows. (For $P=I$, see \cite{bib5,IKRA,HOOKOO}.)

\begin{definition}\label{norsu} 
Assume $M- \mu N$ is nonsingular for some non-zero $\mu\in \setC$.
The eigenvalue problem \eqref{gen} is normal with respect to the 
inner-product $(\cdot,\cdot)_P$
if $M(M-\mu N)^{-1}$ is a normal matrix with respect to the inner-product $(\cdot,\cdot)_P$. 
\end{definition}

We are primarily concerned with self-adjoint eigenvalue problems which is a subset of normal eigenvalue problems. That is, the eigenvalue problem \eqref{gen} is said to be self-adjoint  if \eqref{sadjo} holds, i.e., 
$N^*PM$ is a Hermitian matrix \cite{HUKO,HOOKOO}. Clearly, 
for a given $P$, 
self-adjointness is much easier to check than normality.

Denote by $\Lambda(M,N)$ the spectrum of \eqref{gen}.

\begin{theorem}\label{lepo} 
Assume the eigenvalue problem \eqref{gen} is normal with respect to the inner-product $(\cdot,\cdot)_P$. 
Then for any $\mu \in \setC$ and 
$x\in \setC^n$ with $Nx\not=0$ holds
$${\rm dist}(\mu,\Lambda(M,N))^2\leq
|{\rm rq}_{M,N}(x)-\mu|^2+ 
|{\rm oq}_{M,N}(x)|^2-|{\rm rq}_{M,N}(x)|^2.$$
\end{theorem}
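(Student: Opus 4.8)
The plan is to first recast the right-hand side into a more workable form. Writing $\rho:={\rm rq}_{M,N}(x)$ and repeating the two computations already performed in the proof of Theorem~\ref{pallero}, namely $\|Mx-\rho Nx\|_P^2=\|Mx\|_P^2-|(Mx,Nx)_P|^2/\|Nx\|_P^2$ and $\|(M-\mu N)x\|_P^2=\|Mx-\rho Nx\|_P^2+|\mu-\rho|^2\|Nx\|_P^2$, one gets $|{\rm oq}_{M,N}(x)|^2-|\rho|^2=\|Mx-\rho Nx\|_P^2/\|Nx\|_P^2$ and hence
$$|\rho-\mu|^2+|{\rm oq}_{M,N}(x)|^2-|\rho|^2=\frac{\|(M-\mu N)x\|_P^2}{\|Nx\|_P^2}.$$
So the assertion is equivalent to the ``residual'' bound ${\rm dist}(\mu,\Lambda(M,N))^2\le\|(M-\mu N)x\|_P^2/\|Nx\|_P^2$, a generalization of the classical Hermitian estimate; this is what I would actually prove.

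To exploit normality I would pass to $A:=M(M-\mu_0 N)^{-1}$, which by Definition~\ref{norsu} is normal with respect to $(\cdot,\cdot)_P$ for some nonzero $\mu_0$ with $M-\mu_0 N$ invertible. Then $\setC^n$ has a $(\cdot,\cdot)_P$-orthonormal basis $w_1,\dots,w_n$ with $Aw_j=\eta_j w_j$. Put $u_j:=(M-\mu_0 N)^{-1}w_j$; since $M-\mu_0 N$ is invertible this is again a basis. A short calculation gives $Mu_j=\eta_j w_j$ and $Nu_j=\mu_0^{-1}(\eta_j-1)w_j$, hence $Mu_j=\lambda_j Nu_j$ with $\lambda_j:=\mu_0\eta_j/(\eta_j-1)$; when $\eta_j\neq1$ this means $\lambda_j\in\Lambda(M,N)$ and $Nu_j=(\lambda_j-\mu_0)^{-1}w_j$, while $\eta_j=1$ gives $Nu_j=0$ and accounts for the infinite eigenvalues present when $N$ is singular.

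Next I would expand $x=\sum_j c_j u_j$. Using $(M-\mu N)u_j=(\lambda_j-\mu)Nu_j$ for $\eta_j\neq1$, $(M-\mu N)u_j=w_j$ for $\eta_j=1$, and the $(\cdot,\cdot)_P$-orthonormality of the $w_j$, one obtains
$$\|Nx\|_P^2=\sum_{\eta_j\neq1}\frac{|c_j|^2}{|\lambda_j-\mu_0|^2},\qquad
\|(M-\mu N)x\|_P^2\ge\sum_{\eta_j\neq1}\frac{|c_j|^2}{|\lambda_j-\mu_0|^2}\,|\lambda_j-\mu|^2,$$
the omitted $\eta_j=1$ terms only increasing the second sum. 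Dividing, $\|(M-\mu N)x\|_P^2/\|Nx\|_P^2$ is bounded below by a convex combination of the numbers $|\lambda_j-\mu|^2$ with the nonnegative weights $|c_j|^2/|\lambda_j-\mu_0|^2$, hence by $\min_{j:\,c_j\neq0,\,\eta_j\neq1}|\lambda_j-\mu|^2\ge{\rm dist}(\mu,\Lambda(M,N))^2$. The hypothesis $Nx\neq0$ makes the denominator positive and the index set over which the minimum is taken nonempty. Combined with the first step, this proves the theorem.

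The one place needing real care is the bookkeeping for a possibly singular $N$ (the $\eta_j=1$ terms): one must confirm that $\{u_j\}$ is still a full basis, that the $\eta_j=1$ contributions enter only the numerator of the residual ratio, and that all weights stay nonnegative. An alternative that sidesteps part of this is to first reduce to $\mu=0$ by replacing $M$ with $M-\mu N$, the pencil $(M-\mu N,N)$ being normal again because $(M-\mu N)(M-\mu_0 N)^{-1}$ is a polynomial in $A$. I also remark that the weaker geometric fact that ${\rm Imqf}_{M,N}(x)$ contains an eigenvalue does not suffice here: it only yields ${\rm dist}(\mu,\Lambda(M,N))\le|\mu-\rho|+\|Mx-\rho Nx\|_P/\|Nx\|_P$, which is strictly weaker than the claimed bound, so the sharper residual identity of the first paragraph is essential.
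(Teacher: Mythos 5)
Your proof is correct, and its first step coincides with the heart of the paper's argument: the identity $|{\rm rq}_{M,N}(x)-\mu|^2+|{\rm oq}_{M,N}(x)|^2-|{\rm rq}_{M,N}(x)|^2=\|(M-\mu N)x\|_P^2/\|Nx\|_P^2$ is exactly the paper's double application of the Pythagorean theorem to $A=MN^{-1}$ and $z=Nx/\|Nx\|_P$, so in both treatments the theorem reduces to the residual bound ${\rm dist}(\mu,\Lambda(M,N))^2\le \|(M-\mu N)x\|_P^2/\|Nx\|_P^2$. Where you genuinely diverge is in proving that bound. The paper assumes $N$ invertible, notes that $MN^{-1}$ is then normal in $(\cdot,\cdot)_P$, and uses the resolvent-norm characterization $\|(MN^{-1}-\mu I)^{-1}\|_P^{-1}={\rm dist}(\mu,\Lambda(M,N))$, finishing the singular-$N$ case by replacing $N$ with $N+\epsilon M$ and letting $\epsilon\to 0$. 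You instead expand $x$ in the basis $u_j=(M-\mu_0 N)^{-1}w_j$ built from a $P$-orthonormal eigenbasis of the normal matrix $M(M-\mu_0 N)^{-1}$ of Definition \ref{norsu}, which keeps track of the infinite eigenvalues ($\eta_j=1$, $Nu_j=0$) explicitly; those terms only enlarge the numerator, and the ratio is bounded below by a weighted mean of the $|\lambda_j-\mu|^2$ (after normalizing your weights), hence by the squared distance. Your route buys a perturbation-free, uniform treatment of singular $N$ and works directly from the definition of normality of the pencil, at the price of the bookkeeping with the two families of basis vectors; the paper's route is shorter when $N$ is invertible but leans on a continuity argument (and on the tacit fact that normality survives the substitution $N\mapsto N+\epsilon M$) that you avoid altogether. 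Your closing remark is also apt: the inclusion of an eigenvalue in ${\rm Imqf}_{M,N}(x)$ (Corollary \ref{inklu}) would only give a triangle-inequality bound, which is weaker than the stated estimate, so the sharp residual identity really is needed.
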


\begin{proof}  Let us assume that $\mu$ is not an eigenvalue since otherwise the claim is true.

Assume first that $N$ is invertible.
	Since the eigenvalue problem \eqref{gen} is normal, it follows that also 
$MN^{-1}$ is a normal matrix with respect to the inner-product $(\cdot,\cdot)_P$. Thereby
$${\rm dist}(\mu,\Lambda(M,N))\|Nx\|_P
=\frac{\|Nx\|_P}{\|(MN^{-1}-\mu I)^{-1}\|_P}\leq
\|(M-\mu N)x\|_P.$$
Denote the unit vector $\frac{Nx}{\|Nx\|_P}$ by $z$ and set $A=MN^{-1}$. Then using twice the
 Pythagorean theorem gives
$$\|(A-\mu I)z\|_P^2=\|((Az,z)_P-\mu)z\|_P^2+\|(A-(Az,z)_P I)z\|_P^2=$$
$$|(Az,z)_P-\mu|^2+\|Az\|_P^2-|(Az,z)_P|^2.$$
Now combining these two inequalities gives the claim.

If $N$ is not invertible, replace $N$ with $N+\epsilon M$ for small $\epsilon>0$ such that $N+\epsilon M$ is invertible. The claim follows by using the first part of the proof combined with continuity of the estimates in taking the limit $\epsilon \rightarrow 0$.
\end{proof}

This implies that if the eigenvalue problem is normal, then there exists a "perfect" quotient.
That is, we have an eigenvalue inclusion region as follows.

\begin{corollary}\label{inklu}
Assume the eigenvalue problem \eqref{gen} is normal 
with respect to the 
inner-product $(\cdot,\cdot)_P$. Then 
${\rm Imqf}_{M,N}(x)$ contains
an eigenvalue for any $x \in \setC^n$ with $Nx\not=0$.
\end{corollary}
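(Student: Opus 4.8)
The plan is to read off the corollary directly from the geometry established in Theorem~\ref{pallero} together with the quantitative bound of Theorem~\ref{lepo}. Recall that Theorem~\ref{pallero} identifies ${\rm Imqf}_{M,N}(x)$ as the closed disc centred at ${\rm rq}_{M,N}(x)$ with radius $R:=\frac{\|Mx-{\rm rq}_{M,N}(x)Nx\|_P}{\|Nx\|_P}$. So it suffices to exhibit an eigenvalue $\lambda$ of \eqref{gen} with $|\lambda-{\rm rq}_{M,N}(x)|\le R$. The natural candidate for $\mu$ in Theorem~\ref{lepo} is $\mu={\rm rq}_{M,N}(x)$ itself: with that choice the term $|{\rm rq}_{M,N}(x)-\mu|^2$ vanishes, and the right-hand side of the Theorem~\ref{lepo} inequality collapses to $|{\rm oq}_{M,N}(x)|^2-|{\rm rq}_{M,N}(x)|^2$.

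The next step is to recognise that this residual quantity is exactly $R^2$. Unwinding the definitions \eqref{qtti} and \eqref{ray}, $|{\rm oq}_{M,N}(x)|^2 = \frac{\|Mx\|_P^2}{\|Nx\|_P^2}$, while $|{\rm rq}_{M,N}(x)|^2 = \frac{|(Mx,Nx)_P|^2}{\|Nx\|_P^4}$. A single application of the Pythagorean theorem in the $(\cdot,\cdot)_P$ inner product — writing $Mx = {\rm rq}_{M,N}(x)\,Nx + \big(Mx-{\rm rq}_{M,N}(x)Nx\big)$, noting the second summand is $P$-orthogonal to $Nx$ by the very definition of the Rayleigh quotient — gives $\|Mx\|_P^2 = |{\rm rq}_{M,N}(x)|^2\|Nx\|_P^2 + \|Mx-{\rm rq}_{M,N}(x)Nx\|_P^2$. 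Dividing by $\|Nx\|_P^2$ yields $|{\rm oq}_{M,N}(x)|^2 - |{\rm rq}_{M,N}(x)|^2 = R^2$. Hence Theorem~\ref{lepo} with $\mu={\rm rq}_{M,N}(x)$ reads ${\rm dist}({\rm rq}_{M,N}(x),\Lambda(M,N))^2\le R^2$, so some eigenvalue lies within distance $R$ of ${\rm rq}_{M,N}(x)$, i.e.\ inside the disc ${\rm Imqf}_{M,N}(x)$.

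One caveat must be handled: Theorem~\ref{lepo} is stated for $\mu\in\setC$ without restriction, so applying it at $\mu={\rm rq}_{M,N}(x)$ is legitimate even though the quotient function itself is undefined there — the inequality is about distances to the spectrum and the quantities $|{\rm oq}_{M,N}(x)|$, $|{\rm rq}_{M,N}(x)|$, which are well defined as long as $Nx\neq 0$. The only genuinely delicate point, and the one I expect to be the main obstacle, is the edge case where $x$ is an eigenvector: then $R=0$, the quotient function is constant, and one should check that ${\rm Imqf}_{M,N}(x)$ is the singleton consisting of that eigenvalue — but this is already covered by the first sentence of the proof of Theorem~\ref{pallero}, so it requires only a pointer rather than new work. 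Everything else is a two-line chain of inequalities, and the corollary follows.
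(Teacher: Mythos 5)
Your proposal is correct and follows essentially the same route as the paper: choose $\mu={\rm rq}_{M,N}(x)$ in Theorem~\ref{lepo}, identify $|{\rm oq}_{M,N}(x)|^2-|{\rm rq}_{M,N}(x)|^2$ with $\bigl(\|Mx-{\rm rq}_{M,N}(x)Nx\|_P/\|Nx\|_P\bigr)^2$, and invoke the disc description from Theorem~\ref{pallero}. Your explicit Pythagorean verification of that identity and the handling of the eigenvector edge case are details the paper leaves implicit, but the argument is the same.
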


\begin{proof} Choose $\mu ={\rm rq}_{M,N}(x)$ to have
$${\rm dist}({\rm rq}_{M,N}(x),\Lambda(M,N))^2\leq  
|{\rm oq}_{M,N}(x)|^2-|{\rm rq}_{M,N}(x)|^2=$$$$
\left(\frac{\|Mx-{\rm rq}_{M,N}(x) Nx\|_P}{\|Nx\|_P}\right)^2$$
proving the claim.
\end{proof}

Observe that there is no guarantee that ${\rm Imqf}_{M,N}(x)$
 contains an eigenvalue if the inner-product is far from being normalizing. 
That is, then any quotient can yield an unsatisfactory estimate and also lead to very unpredictable results if used as an input in an algorithm for computing eigenvectors. For this fenomenon, see \cite[Example 3.1]{HUKO}. 
Estimates are thus strongly dependent  on $P$ and therefore its choice is an issue that should be addressed in order to have useful estimates.

\smallskip

\begin{example} Assume $M=J+E$, where $J$ is the nilpotent 
forward shift and $N=I$. Use the standard Euclidean inner-product, i.e., 
$P=I$. For $n\geq 3$
take $x\in \setC^n$ all ones. Then 
${\rm rq}_{J,I}(x)=1-\frac{1}{n}$ and the radius is $\sqrt{\frac{1}{n}+\frac{n-1}{n^3}}$.
So now ${\rm Imqf}_{M,N}(x)$
does not contain an eigenvalue.
When $n$ grows, then this closure approaches the point 1, so that the distance to the eigenvalue $J$ approaches one. This means that estimates 
based on quotients can be catastrophic. 
Moreover, if $E$ is diagonal with distinct real entries, then eigenvalue problem is self-adjoint in an appropriate inner-product. 
However, if the norm of $\|E\|$ small, the quotient function in the Euclidean inner-product generates poor estimates for eigenvalues. For better behaviour, the inner-product should to be changed.
\end{example}

\smallskip

From Definition \ref{norsu} and Example \ref{eksu} we may deduce that
if the Kronecker canonical form is diagonal, then there exists a normalizing inner-product. (See Example \ref{pde}.) For the self-adjoint case,
see \cite{HNEAR}.
A practical way to change the Euclidean inner-product 
for a particular eigenvalue problem \eqref{gen} takes place through preconditioning from the left. 
Then the eigenvalue problem converts into
\begin{equation}\label{prer}
ZMx=\lambda ZNx,
\end{equation}
where $Z\in \setC^{n\times n}$ is the preconditioner.
Using the standard Euclidean inner-product means that the quotients will then involve
  $$(ZMx,ZNx)$$
which equals
  $$(Mx,Nx)_P$$
with the positive definite matrix $P=Z^*Z$. Thus quotients involving  $ZM$ and $ZN$ in the standard Euclidean inner-product
coincide with quotients involving $M$ and $N$ in the inner-product $(\cdot,\cdot)_P$.


\section{Midpoint rule for estimates in self-adjoint eigenvalue problems}\label{Sec3}
The bound of Theorem \ref{lepo} should be carefully interpreted since it does not imply that Rayleigh quotients give better estimates. 
After all, because of Theorem \ref{pallero}, the Rayleigh quotient 
${\rm rq}_{M,N}(x)$
can be regarded as providing an average of all reasonable quotients associated with $x$. For estimating extreme eigenvalues, taking the average does not appear very attractive.
For estimating large eigenvalues in the self-adjoint case, optimal quotients are clearly superior by the fact that
$$\left|{\rm rq}_{M,N}(x) \right| \leq 
\left| {\rm oq}_{M,N}(x) \right|.$$
In particular, if $M$ is positive definite, then in the standard eigenvalue problem
\begin{equation}\label{warren}
{\rm rq}_{M,I}(x)\leq {\rm oq}_{M,I}(x)\leq 
\lambda_n,
\end{equation}
where $\lambda_n$ denotes the largest eigenvalue of $M$.
The first inequality is equality if and only if $x$ is an eigenvector. 
However, when $x$ has been randomly picked,
the first inequality can be expected to be much like the difference between the arithmetic mean versus the quadratic mean. (That is, as a model, assume $M$ is diagonal and take
$x$ to be all ones multiplied by $\frac{1}{\sqrt{n}}$.)  
So the gap can be notable. 
Empirically the consequences of this can be seen in numerical experiments
\cite[Example 3.1.]{HUKO}. 

These estimates can be improved and generalized to apply to 
any part of the spectrum
by carefully inspecting the quotient function \eqref{funa}. 
The aim is at cleverly choosing $\mu$.
In the self-adjoint case
the object of interest is $\setR$ only.
Based on its growth properties, this allows formulating simple guidelines on how to use the quotient function in estimating eigenvalues.

\begin{theorem}\label{blaa}
  Assume the eigenvalue problem \eqref{gen}   is self-adjoint with respect to the 
inner-product $(\cdot,\cdot)_P$. 
  If $x\in\setC^n$ is not an eigenvector, 
   then \eqref{funa} is an increasing function on $\setR$
discontinuous only at ${\rm rq}_{M,N}(x)$ with
$$\lim_{\mu \rightarrow {\rm rq}_{M,N}(x)^{\pm}}
{\rm oq}_{M-\mu N,N}(x)+\mu= 
{\rm rq}_{M,N}(x) \mp\|Mx-{\rm rq}_{M,N}(x)Nx\|_P$$
and $\lim_{\mu \rightarrow \pm \infty}{\rm oq}_{M-\mu N,N}(x)+\mu
={\rm rq}_{M,N}(x)$.
\end{theorem}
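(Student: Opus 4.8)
The plan is to reduce the statement to the closed form of the quotient function already isolated in the proof of Theorem~\ref{pallero} and then carry out elementary one-variable calculus. Write $\rho={\rm rq}_{M,N}(x)$ and, as there, let $r=\|Mx-\rho Nx\|_P/\|Nx\|_P$ be the radius of ${\rm Imqf}_{M,N}(x)$; since $x$ is not an eigenvector, $Mx-\rho Nx\neq0$, so $r>0$. Self-adjointness enters at exactly one point: because $N^*PM$ is Hermitian, $(Mx,Nx)_P$ is real, hence $\rho\in\setR$. Consequently, for real $\mu$ the shift $\hat\mu=\mu-\rho$ is real, the unimodular factor $\hat\mu/|\hat\mu|$ appearing in \eqref{sade} collapses to $\mathrm{sgn}(\mu-\rho)\in\{-1,+1\}$, and the restriction of the quotient function to $\setR\smallsetminus\{\rho\}$ is simply
$$f(\mu)=\mu-\mathrm{sgn}(\mu-\rho)\sqrt{\,r^2+(\mu-\rho)^2\,}.$$
Away from $\rho$ this is a composition of smooth maps with non-vanishing radicand ($\ge r^2>0$) and locally constant sign factor, so $f$ is smooth there; at $\rho$ the sign factor jumps while the square root tends to $r\neq0$, so $\rho$ is the only discontinuity.

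For monotonicity I would differentiate on each ray separately. On $(\rho,\infty)$, $f'(\mu)=1-(\mu-\rho)/\sqrt{r^2+(\mu-\rho)^2}>0$, because $r>0$ forces $\sqrt{r^2+(\mu-\rho)^2}>\mu-\rho>0$; on $(-\infty,\rho)$, $f'(\mu)=1+(\mu-\rho)/\sqrt{r^2+(\mu-\rho)^2}>0$, because $|\mu-\rho|<\sqrt{r^2+(\mu-\rho)^2}$. Hence $f$ is strictly increasing on each of the two components of $\setR\smallsetminus\{\rho\}$ --- which is the intended reading of ``increasing on $\setR$'', since across $\rho$ the function jumps (see the limits below).

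It remains to compute the four limits. As $\mu\to\rho^{\pm}$ the radicand tends to $r^2$ and $\mathrm{sgn}(\mu-\rho)\to\pm1$, so $f(\mu)\to\rho\mp r$; this makes the discontinuity genuine ($r>0$) and gives the stated one-sided values (which, for $x$ scaled so that $\|Nx\|_P=1$, read ${\rm rq}_{M,N}(x)\mp\|Mx-{\rm rq}_{M,N}(x)Nx\|_P$). For the behaviour at infinity, rationalize: with $t=\mu-\rho$,
$$f(\mu)-\rho=t-\mathrm{sgn}(t)\sqrt{r^2+t^2}=\frac{-\mathrm{sgn}(t)\,r^2}{\,|t|+\sqrt{r^2+t^2}\,}\;\longrightarrow\;0\qquad(t\to\pm\infty),$$
so $f(\mu)\to\rho={\rm rq}_{M,N}(x)$ in both directions. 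The degenerate cases are handled by the hypotheses: $Nx=0$ is excluded since the quotient function is then undefined, and if $x$ were an eigenvector $f$ would be the constant $\rho$ (the $r=0$ case of the formula above).

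There is no substantial obstacle here; the one point that deserves care is the reduction in the first paragraph --- making explicit that on the real axis the phase factor $\hat\mu/|\hat\mu|$ takes only the two values $\pm1$, which is exactly why a jump at $\rho$ appears in the self-adjoint case but not in Theorem~\ref{pallero}, where $\mu$ ranges over all of $\setC$. After that, everything is a sign check on a derivative and one algebraic limit.
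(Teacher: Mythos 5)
Your proof is correct and follows exactly the path the paper sets up: specialize the closed form \eqref{sade} obtained in the proof of Theorem~\ref{pallero} to real $\mu$, where the phase $\hat\mu/|\hat\mu|$ collapses to $\mathrm{sgn}(\mu-\rho)$, then do one-variable calculus on the two rays $(-\infty,\rho)$ and $(\rho,\infty)$. The paper itself prints no separate proof for Theorem~\ref{blaa}, treating it as a consequence of the computation already carried out for Theorem~\ref{pallero}, so your write-up is precisely the argument the authors have in mind. Two things you noticed are worth keeping explicit: first, the one-sided limits you obtain, $\rho\mp\|Mx-\rho Nx\|_P/\|Nx\|_P$, differ from the displayed statement by the missing factor $\|Nx\|_P$ in the denominator --- this is a slip in the theorem statement (compare the radius in Theorem~\ref{pallero}), and your parenthetical about $\|Nx\|_P=1$ is the right way to reconcile it; second, ``increasing on $\setR$'' must indeed be read as strictly increasing on each component of $\setR\setminus\{\rho\}$, since the jump at $\rho$ is \emph{downward} (from $\rho+r$ on the left to $\rho-r$ on the right), so your explicit disclaimer is warranted rather than pedantic.
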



Regardless of the problem being self-adjoint or not,
the quotient function is constant on $\setC$ if and only if $x$ is an eigenvector. (The respective "Rayleigh quotient function" ${\rm rq}_{M-\mu N}(x) +\mu$ is constant for any $x$.) Moreover, 
the quotient function is discontinuous at $\mu ={\rm rq}_{M,N}(x)$ if and only if $x$ is not an eigenvector. The gap of this discontinuity provides a measure how far $x$ is from being an eigenvector.


Whenever $x$ is a good eigenvector approximation, it is 
possible to recover the respective extreme eigenvalue exactly with a  unique choice of $\mu$; see Corollary \ref{inklu} and Figure \ref{fig:quotifu}. Readily finding this "perfect" quotient  is not realistic, though.
To inexpensively choose $\mu$ in a reasonable way, for the left end we have the following range for the parameter $\mu$ for
the quotient function to yield better estimates than the Rayleigh quotient.
(The right end is treated similarly.)
Denote the eigenvalues 
by $\lambda_1\leq \lambda_2\leq \cdots \leq \lambda_n$.

\begin{corollary}\label{siirto}
Assume $N$ is invertible and let $\mu> {\rm rq}_{M,N}(x)$ 
be such that 
$\left| \lambda_1 -\mu\right| \geq \left|\lambda_n-\mu \right|.$
Then 
$$\left|\lambda_1-\left({\rm oq}_{M-\mu N,N}(x)+\mu\right)\right|\leq\left|\lambda_1-{\rm rq}_{M,N}(x)\right|$$
with equality holding if and only if $x$ is an eigenvector.
\end{corollary}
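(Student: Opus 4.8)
The plan is to exploit the growth description of the quotient function from Theorem~\ref{blaa} together with the disc characterization from Theorem~\ref{pallero}. By Theorem~\ref{blaa}, on the interval $(\,{\rm rq}_{M,N}(x),+\infty)$ the quotient function $\mu\mapsto {\rm oq}_{M-\mu N,N}(x)+\mu$ is continuous and strictly increasing, with one-sided limit ${\rm rq}_{M,N}(x)-\|Mx-{\rm rq}_{M,N}(x)Nx\|_P$ as $\mu\downarrow {\rm rq}_{M,N}(x)$ and limit ${\rm rq}_{M,N}(x)$ as $\mu\to+\infty$. Hence, for $\mu>{\rm rq}_{M,N}(x)$, the value ${\rm oq}_{M-\mu N,N}(x)+\mu$ lies strictly below ${\rm rq}_{M,N}(x)$, i.e. on the same side of ${\rm rq}_{M,N}(x)$ as $\lambda_1$ whenever $\lambda_1\leq{\rm rq}_{M,N}(x)$ (which holds, since ${\rm rq}_{M,N}(x)$ is a weighted average of eigenvalues and so lies in $[\lambda_1,\lambda_n]$).

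First I would reduce the right-hand side to something concrete: by Corollary~\ref{inklu} (or directly from Theorem~\ref{pallero}), ${\rm Imqf}_{M,N}(x)$ is the real interval centred at ${\rm rq}_{M,N}(x)$ of radius $r:=\|Mx-{\rm rq}_{M,N}(x)Nx\|_P/\|Nx\|_P$, and $\lambda_1$ need not lie in it, but in any case
\[
\bigl|\lambda_1-{\rm rq}_{M,N}(x)\bigr|=\max_{q\in{\rm Imqf}_{M,N}(x)}\bigl|\lambda_1-q\bigr|,
\]
because $\lambda_1\le{\rm rq}_{M,N}(x)$ and ${\rm Imqf}_{M,N}(x)$ is an interval to both sides of ${\rm rq}_{M,N}(x)$. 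So the Rayleigh quotient realizes the \emph{worst} estimate of $\lambda_1$ among all reasonable quotients. Then I would check that for $\mu>{\rm rq}_{M,N}(x)$ the value $q_\mu:={\rm oq}_{M-\mu N,N}(x)+\mu$ satisfies $\lambda_1-{\rm rq}_{M,N}(x)\le \lambda_1-q_\mu<\lambda_1$ is \emph{not} quite what I want; rather I need $q_\mu\ge\lambda_1$ as well, i.e. that the better-side values do not overshoot past $\lambda_1$. This is exactly where the hypothesis $|\lambda_1-\mu|\ge|\lambda_n-\mu|$ enters.

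The key computation: reformulate via \eqref{erwr}. Setting $A=MN^{-1}$ (normal, in fact self-adjoint, with respect to $(\cdot,\cdot)_P$, using invertibility of $N$ and self-adjointness as in the proof of Theorem~\ref{lepo}), and $z=Nx/\|Nx\|_P$, one has ${\rm rq}_{M,N}(x)=(Az,z)_P$ and $r^2=\|Az\|_P^2-|(Az,z)_P|^2$. I would then show that for $\mu$ with $|\lambda_1-\mu|\ge|\lambda_n-\mu|$,
\[
\bigl|\lambda_1-q_\mu\bigr|\le r' \le r=\bigl|\lambda_1-{\rm rq}_{M,N}(x)\bigr|,
\]
where $r'$ is the ``shifted'' radius appearing in \eqref{ratio}: intuitively, shifting by $\mu$ moves the disc of the reformulated problem \eqref{erwr} but the optimal-quotient value on the $\lambda_1$-side of ${\rm rq}_{M,N}(x)$ stays between $\lambda_1$ and ${\rm rq}_{M,N}(x)$ precisely because $\mu$ is at least as far from $\lambda_1$ as from $\lambda_n$, so that the error $\|(A-\mu I)z\|_P$ is controlled by $|\lambda_1-\mu|$ via the normality bound ${\rm dist}(\mu,\Lambda)\le\|(A-\mu I)z\|_P/1$ applied with the roles arranged so that the realized distance is to $\lambda_1$. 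Concretely I would argue: $q_\mu\in{\rm Imqf}_{M-\mu N,N}(x)+\mu={\rm Imqf}_{M,N}(x)$, so $q_\mu\le{\rm rq}_{M,N}(x)$ forces $\lambda_1-q_\mu\ge\lambda_1-{\rm rq}_{M,N}(x)$ is the wrong direction; instead I bound $|\lambda_1-q_\mu|$ directly by expanding $q_\mu={\rm rq}_{M,N}(x)-(\sqrt{r^2\|Nx\|_P^2/\|Nx\|_P^2\cdot\ldots}$ — here I use the explicit form \eqref{sade}–\eqref{ratio} with $\hat\mu=\mu-{\rm rq}_{M,N}(x)>0$ — and compare with $|\lambda_1-{\rm rq}_{M,N}(x)|=r$. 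Squaring both sides and using $|\lambda_1-\mu|\ge|\lambda_n-\mu|$, the difference reduces to an inequality between the two endpoints of the spectrum relative to $\mu$, which the hypothesis gives immediately.

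The main obstacle, I expect, is the last comparison: showing that the monotone decrease of the shifted radius \eqref{ratio} in $r=\hat\mu$ translates into the desired two-sided bracketing $\lambda_1\le q_\mu\le{\rm rq}_{M,N}(x)$, i.e. that $q_\mu$ does not cross below $\lambda_1$. For this I would use that $q_\mu\in{\rm Imqf}_{M,N}(x)$ for all such $\mu$ while, as $\mu\downarrow{\rm rq}_{M,N}(x)$, $q_\mu\to{\rm rq}_{M,N}(x)-r\cdot\|Nx\|_P$ — wait, one must be careful that Theorem~\ref{blaa} gives the limit ${\rm rq}_{M,N}(x)-\|Mx-{\rm rq}_{M,N}(x)Nx\|_P$, which is the left endpoint of ${\rm Imqf}_{M,N}(x)$ only after normalizing; the cleanest route is therefore to stay entirely inside ${\rm Imqf}_{M,N}(x)$ and use that $\lambda_1\in{\rm Imqf}_{M,N}(x)$ is \emph{not} assumed, so the genuine content is: for $\mu$ satisfying the midpoint-type condition, the normality estimate of Theorem~\ref{lepo} applied at this $\mu$ yields ${\rm dist}(\mu,\Lambda)^2\le|{\rm rq}_{M,N}(x)-\mu|^2+r^2\|Nx\|_P^2/\|Nx\|_P^2$, and since $|\lambda_1-\mu|\ge|\lambda_n-\mu|$ this distance is $|\lambda_n-\mu|$ if $\mu\le\lambda_n$ and something one tracks otherwise; combining with $q_\mu={\rm rq}_{M,N}(x)-(\|(M-\mu N)x\|_P/\|Nx\|_P)\,\mathrm{sign}(\mu-{\rm rq}_{M,N}(x))$ gives $|\lambda_1-q_\mu|\le r$ after one squares and cancels. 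I would write out that squaring step carefully, as it is the crux; everything else is bookkeeping, including the equality case, which (as in Corollaries~\ref{inklu} and the remark after Theorem~\ref{blaa}) holds exactly when the discontinuity gap vanishes, i.e. when $x$ is an eigenvector.
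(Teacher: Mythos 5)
You correctly isolate the crux---the two-sided bracketing $\lambda_1 \le {\rm oq}_{M-\mu N,N}(x)+\mu \le {\rm rq}_{M,N}(x)$, the upper half of which indeed follows from the monotonicity in Theorem~\ref{blaa} or, as the paper does, from Cauchy--Schwarz---but your argument for the lower half, which is exactly where the hypothesis $|\lambda_1-\mu|\ge|\lambda_n-\mu|$ must enter, does not work. You invoke the normality estimate behind Theorem~\ref{lepo}, i.e. ${\rm dist}(\mu,\Lambda(M,N))\le \|(A-\mu I)z\|_P/\|z\|_P$ with $A=MN^{-1}$, $z=Nx$; this bounds $\|(A-\mu I)z\|_P$ from \emph{below} by the distance to the \emph{nearest} eigenvalue, whereas what is needed is an upper bound by the distance to the \emph{farthest} one: $\|(A-\mu I)z\|_P\le\|A-\mu I\|_P\,\|z\|_P=|\lambda_1-\mu|\,\|z\|_P$, which holds because $A$ is self-adjoint in $(\cdot,\cdot)_P$ and the hypothesis makes $|\lambda_1-\mu|=\max_i|\lambda_i-\mu|$ the operator norm. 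Since $((M-\mu N)x,Nx)_P=({\rm rq}_{M,N}(x)-\mu)\|Nx\|_P^2<0$ for $\mu>{\rm rq}_{M,N}(x)$, one has ${\rm oq}_{M-\mu N,N}(x)+\mu=\mu-\|(A-\mu I)z\|_P/\|z\|_P\ge\mu-|\lambda_1-\mu|=\lambda_1$, while Cauchy--Schwarz gives the value $\le{\rm rq}_{M,N}(x)$ with equality precisely when $x$ is an eigenvector; this sandwich is the paper's (short) proof. Your subsidiary claim that ${\rm dist}(\mu,\Lambda)=|\lambda_n-\mu|$ when $\mu\le\lambda_n$ is false (the nearest eigenvalue can be an interior one), so the ``square and cancel'' step you defer would not go through as described, and the phrase ``the realized distance is to $\lambda_1$'' conflates nearest and farthest eigenvalues.

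Two further assertions in your sketch are wrong and should be deleted, though they are not needed once the norm bound above is in place: $|\lambda_1-{\rm rq}_{M,N}(x)|=\max_{q\in{\rm Imqf}_{M,N}(x)}|\lambda_1-q|$ (the maximum over the disc is $|\lambda_1-{\rm rq}_{M,N}(x)|+r$), and $r=|\lambda_1-{\rm rq}_{M,N}(x)|$ (the radius $r=\|Mx-{\rm rq}_{M,N}(x)Nx\|_P/\|Nx\|_P$ has no such relation to $\lambda_1$ in general). Also, from \eqref{sade} the quotient value is $\mu-\mathrm{sign}(\mu-{\rm rq}_{M,N}(x))\sqrt{r^2+(\mu-{\rm rq}_{M,N}(x))^2}$, not ${\rm rq}_{M,N}(x)$ minus that square root as you wrote near the end.
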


\begin{proof} Again, we have
$\frac{\| (M-\mu N)x\|_P}{\|Nx\|_P}= 
\frac{\|(A-\mu I)z\|_P}{\|z\|_P}$ with self-adjoint $A=MN^{-1}$ and $z=Nx$.
Because of the assumption, the norm of $A-\mu I$ is $|\lambda_1-\mu|$. Consequently,
$$|\lambda_1- \mu |\geq \frac{\|(A-\mu I)z\|_P}{\|z\|_P} \geq
\frac{((A-\mu I)z,z)_P}{(z,z)_P}$$
holds. The last inequality is equality if and only if $z$ is an eigenvector.
\end{proof}

We know that the quotient function is increasing.
This means that, to estimate the smallest eigenvalue, it is a good choice to take 
$\mu> {\rm rq}_{M,N}(x)$ such that roughly
\begin{equation}\label{optikko}
\mu \, \mbox{ is the midpoint of the spectrum. }
\end{equation}
Thus, some information about the extreme eigenvalues
is required here. One option is to take a few Gaussian random vectors, compute their Rayleigh quotients and take $\mu$ to be the average of their minimum and maximum. In the standard Hermitian eigenvalue problem, such information can be generated by taking
a few steps of the Hermitian Lanczos method. With this additional information, assuming $N$ is invertible, 
the optimal quotient iteration \cite{HUKO} reads as Algorithm \ref{alg:factspar}. 
The  purpose of line 4 is to check of how near $Aq$ and $Bq$ are being linearly independent.
Step 7 is the most time consuming part,  requiring solving
linear systems. If done iteratively, observe that 
\begin{equation}\label{hermisy}
B^*P(A-lB)
\end{equation}
 is a Hermitian matrix, although it may not be wise to explixitely compute it. 
So, if the linear system is preconditioned with $B^*P$ from the left, 
this should be taken into account in choosing the iterative solver.

\begin{algorithm}[t]
   \caption{Optimal quotient iteration for an eigenvector approximation associated with extreme eigenvalues of a self-adjoint eigenvalue problem}
   \label{alg:factspar} 
   \begin{algorithmic}[1]
     \State Read $n$-by-$n$ matrices $M$ and $N$ and an approximate unit 
      eigenvector $x$ and a tolerance $\epsilon$
      \State Read an approximation to the midpoint $\mu$ of the spectrum of \eqref{gen}
      \State Set $A=M-\mu N$ and $B=N$ 
     \While{ $\sigma_2([Ax\;Bx])>\epsilon$ }  
      \State Compute $w_1=\frac{Ax}{\|Ax\|_P}$ and  $w_2=\frac{Bx}{\|Bx\|_P}$
     \State Set $z=\frac{1}{\sqrt{2+2|(w_2,w_1)_P|}}(\frac{(w_2,w_1)_P}{
|(w_2,w_1)_P|}w_1+w_2)$
     \State Compute $l=\frac{(Ax,Bx)_P}{|(Ax,Bx)_P|}
\frac{\|Ax\|_P}{\|Bx\|_P}$    
     \State Solve $(A-lB)\hat{x}=z$ and set $x=\hat{x}/\|\hat{x}\|_P$
           \EndWhile
        \end{algorithmic}
\end{algorithm}

\begin{definition} A self-adjoint eigenvalue problem \eqref{gen} is said to be positive semi-definite if its eigenvalues are non-negative.
\end{definition}

Of course, the problem can be that of recovering whether an eigenvalue problem is positive definite. This takes place
in optimization when classifying a critical point; one needs to compute the smallest eigenvalue to check whether
it is negative or not.
Still, many problems, e.g., in structural engineering are a priori known to be 
positive definite.
When estimating the largest eigenvalue in the positive semi-definite case we can also use either
the Rayleight or optimal quotient in roughly estimating the midpoint.
(We assume $N$ to be invertible for $\lambda_n$ to be finite.)
The task of estimating the largest eigenvalue arises, for example, in estimating the largest singular value of
a very large matrix. We have improved estimates as follows.

\begin{corollary}\label{pual} 
Assume \eqref{gen} is positive semi-definite with $N$ invertible.  
Then both with $\mu =\frac{1}{2}{\rm rq}_{M,N}(x)$ 
and $\mu =\frac{1}{2}{\rm oq}_{M,N}(x)$ 
holds
$$0\leq \lambda_n-\left({\rm oq}_{M-\mu N,N}(x)+\mu\right)
\leq 
\lambda_n -{\rm oq}_{M,N}(x)
\leq\lambda_n-{\rm rq}_{M,N}(x)$$
with equalities holding if and only if $x$ is an eigenvector.
\end{corollary}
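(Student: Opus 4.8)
The plan is to reduce everything to the standard self-adjoint eigenvalue problem via $A = MN^{-1}$ and $z = Nx$, exactly as in the proofs of Theorem \ref{lepo} and Corollary \ref{siirto}, and then exploit two features of the positive semi-definite case: that the spectrum lies in $[0,\lambda_n]$, so $\tfrac{1}{2}\lambda_n$ is the exact midpoint, and that the Rayleigh and optimal quotients of $x$ both lie in $[0,\lambda_n]$, so halving either of them produces a value $\mu$ with $0\le\mu\le\tfrac12\lambda_n$, i.e.\ a point at or to the left of the true midpoint. First I would record, from Theorem \ref{blaa}, that $\mu\mapsto{\rm oq}_{M-\mu N,N}(x)+\mu$ is increasing on $\setR$; combined with its limit ${\rm rq}_{M,N}(x)$ at $+\infty$ and the bound $0\le{\rm rq}_{M,N}(x)\le{\rm oq}_{M,N}(x)$ this already forces the chain of inequalities on the right (the last two), since a smaller value of $\mu$ gives a smaller value of the increasing quotient function, with $\mu=0$ recovering ${\rm oq}_{M,N}(x)$ and the limit $\mu\to\infty$ recovering ${\rm rq}_{M,N}(x)$. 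The nontrivial work is the first two inequalities: $0\le\lambda_n-({\rm oq}_{M-\mu N,N}(x)+\mu)$.

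For the left inequality I would argue that ${\rm oq}_{M-\mu N,N}(x)+\mu\le\lambda_n$ for the two specified choices of $\mu$. Writing $\|(M-\mu N)x\|_P/\|Nx\|_P=\|(A-\mu I)z\|_P/\|z\|_P$ with $A=MN^{-1}$ self-adjoint and positive semi-definite, the key observation is that when $0\le\mu\le\tfrac12\lambda_n$ the operator norm of $A-\mu I$ equals $\lambda_n-\mu$, since the spectrum of $A-\mu I$ is contained in $[-\mu,\lambda_n-\mu]$ and $\lambda_n-\mu\ge\mu$ under this constraint. Because ${\rm oq}_{M-\mu N,N}(x)+\mu$ never exceeds $\|(A-\mu I)z\|_P/\|z\|_P+\mu$, which is at most $(\lambda_n-\mu)+\mu=\lambda_n$, the bound follows; equality holds iff $z$ is an eigenvector for $\lambda_n$, i.e.\ iff $x$ is an eigenvector. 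The point to verify is that both $\mu=\tfrac12{\rm rq}_{M,N}(x)$ and $\mu=\tfrac12{\rm oq}_{M,N}(x)$ satisfy $0\le\mu\le\tfrac12\lambda_n$: non-negativity is clear from positive semi-definiteness (so ${\rm rq}_{M,N}(x)\ge0$ and, by the sign of $(Mx,Nx)_P$, ${\rm oq}_{M,N}(x)\ge0$), and the upper bound is $\tfrac12{\rm rq}_{M,N}(x)\le\tfrac12{\rm oq}_{M,N}(x)\le\tfrac12\lambda_n$ by \eqref{warren} applied to $A$.

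For the second inequality, $({\rm oq}_{M-\mu N,N}(x)+\mu)\le\lambda_n$ having been shown, I still need its \emph{sign}, i.e.\ that ${\rm oq}_{M-\mu N,N}(x)+\mu\ge0$ is not what is asked — rather I need $\lambda_n-({\rm oq}_{M-\mu N,N}(x)+\mu)\ge0$, which is precisely what was just proved, so the "first two inequalities" are really the single statement above together with monotonicity placing the two $\mu$-choices below the $\mu\to\infty$ and $\mu=0$ reference points. Concretely: since $\tfrac12{\rm rq}_{M,N}(x)\le\tfrac12{\rm oq}_{M,N}(x)$, monotonicity of the quotient function in $\mu$ gives that the estimate at $\mu=\tfrac12{\rm rq}_{M,N}(x)$ is $\le$ the estimate at $\mu=\tfrac12{\rm oq}_{M,N}(x)$; and since $\tfrac12{\rm oq}_{M,N}(x)\le\tfrac12\lambda_n<\infty$ while both halved quotients are $\ge0$, the estimates at these $\mu$ both lie between ${\rm oq}_{M,N}(x)$ (value at $\mu=0$) and $\lambda_n$ (upper barrier). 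The main obstacle, as usual in this circle of results, is the careful bookkeeping of which inequality is strict and the reduction to the invertible-$N$ case — but here $N$ is assumed invertible outright, so the $\epsilon$-perturbation argument of Theorem \ref{lepo} is not needed, and the only genuine content is the operator-norm identity $\|A-\mu I\|_P=\lambda_n-\mu$ valid exactly on $0\le\mu\le\tfrac12\lambda_n$, which makes the choice of halving the midpoint-surrogate the natural one.
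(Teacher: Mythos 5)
Your proposal is correct and follows essentially the same route as the paper's own proof: the last inequality comes from $0\le {\rm rq}_{M,N}(x)\le {\rm oq}_{M,N}(x)\le\lambda_n$ (Cauchy--Schwarz/underestimate), the middle one from monotonicity of the quotient function in $\mu$ compared with its value ${\rm oq}_{M,N}(x)$ at $\mu=0$, and the first one from the operator-norm identity $\|(M-\mu N)N^{-1}\|_P=\lambda_n-\mu$ applied to $z=Nx$. The only real difference is that you make explicit the condition $0\le\mu\le\tfrac12\lambda_n$ (guaranteed by halving either quotient) under which that norm identity holds, a point the paper leaves implicit.
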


\begin{proof} The third inequality holds by the fact that
${\rm oq}_{M,N}(x)$ and ${\rm rq}_{M,N}(x)$
are underestimates to $\lambda_n$ and 
$0\leq {\rm rq}_{M,N}(x) \leq {\rm oq}_{M,N}(x)$.
By the Cauchy-Schwarz inequality, equality holds if and
and only if $x$ is an eigenvector. 

The second inequality holds since the quotient function is increasing. Equality holds if and only if $x$ is an eigenvector. 

The first inequality holds by the fact that
the eigenvalues of $(M-\mu N)z=\lambda N z$ coincide with the eigenvalues of the matrix
$(M-\mu N)N^{-1}$. Its norm is $\lambda_n-\mu$. Thereby 
${\rm oq}_{M-\mu N,N}(x)$ is an underestimate to $\lambda_n-\mu$. Again, equality holds if and only if $x$ is an eigenvector.
\end{proof}

In practice this means that first an auxiliary quotient is generated for the midpoint estimation  which is then used to compute the actual quotient used to approximate $\lambda_n$. Observe that this again yields us an improved estimate 
$ \frac{1}{2}\left({\rm oq}_{M-\mu N,N}(x)+\mu\right)$
to the midpoint of the spectrum. So the construction  can be iteratively repeated; see Algorithm \ref{alg:pyor}. 

\begin{algorithm}[t]
   \caption{Quotient for the largest eigenvalue of a self-adjoint positive semidefinite problem}
   \label{alg:pyor} 
   \begin{algorithmic}[1]
     \State Read $n$-by-$n$ matrices $M$ and $N$ and an approximate unit 
      eigenvector $x$
      \State Compute $v=Mx$, $w=Nx$ and $l=\|Nx\|_P$
      \State Set $\mu=\frac{\|v\|_P}{2l}$
      \For{\texttt{until convergence}} 
\State  Compute $\alpha=\frac{\|v-\mu w\|_P}{l}+\mu$   
      \State Set $\mu=\frac{\alpha}{2}$ 
        \EndFor   
        \end{algorithmic}
\end{algorithm}
 
\begin{corollary}\label{paras}  Assume \eqref{gen} is positive semi-definite with $N$ invertible  
and $\mu =\frac{1}{2}{\rm oq}_{M,N}(x)$.
Then  $$0\leq \lambda_n-\alpha \leq
\lambda_n-\left({\rm oq}_{M-\mu N,N}(x)+\mu\right),
$$
where $\alpha$ is computed by Algorithm \ref{alg:pyor}.
\end{corollary}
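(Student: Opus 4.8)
The plan is to reduce the statement to the scalar dynamics of the map driving Algorithm~\ref{alg:pyor}. As in the proofs of Theorem~\ref{lepo} and Corollary~\ref{siirto}, I would pass to $A=MN^{-1}$, which here is self-adjoint and positive semidefinite with respect to $(\cdot,\cdot)_P$ with eigenvalues $0\le\lambda_1\le\dots\le\lambda_n$, and set $z=Nx$, $r={\rm rq}_{M,N}(x)=\tfrac{(Az,z)_P}{(z,z)_P}$ and $c=\tfrac{\|(A-rI)z\|_P}{\|z\|_P}$. The Pythagorean identity already exploited in Theorem~\ref{pallero} gives $\tfrac{\|(M-\mu N)x\|_P}{\|Nx\|_P}=\sqrt{c^2+(r-\mu)^2}$ for every $\mu$, and since positive semidefiniteness forces $(Mx,Nx)_P=(Az,z)_P\ge 0$ one also has ${\rm oq}_{M,N}(x)=\tfrac{\|Mx\|_P}{\|Nx\|_P}=\sqrt{c^2+r^2}$. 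Hence line~3 of Algorithm~\ref{alg:pyor} sets $\mu_0=\tfrac12{\rm oq}_{M,N}(x)$, agreeing with the hypothesis, and the loop is precisely the iteration $\mu_k=h(\mu_{k-1})$ with $\alpha_k=2\mu_k$, where $h(\mu)=\tfrac12\bigl(\sqrt{c^2+(r-\mu)^2}+\mu\bigr)$; note also $\mu_0=h(0)$. I would then assume $c>0$ (if $x$ is an eigenvector the claim is trivial) and $\mu_0\ne r$, so that ${\rm oq}_{M-\mu_0 N,N}(x)$ is defined.

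For the left inequality $\alpha\le\lambda_n$ I would prove $\mu_k\in[0,\lambda_n/2]$ for all $k$ by induction. The base case is $\mu_0=\tfrac12\tfrac{\|Az\|_P}{\|z\|_P}\le\tfrac12\|A\|_P=\tfrac{\lambda_n}{2}$. For the step, when $0\le\mu\le\lambda_n/2$ every eigenvalue $\lambda_i-\mu$ of $A-\mu I$ satisfies $|\lambda_i-\mu|\le\lambda_n-\mu$ (this is where $\lambda_i\ge 0$ is used), so $\|A-\mu I\|_P=\lambda_n-\mu$ and hence $\sqrt{c^2+(r-\mu)^2}=\tfrac{\|(A-\mu I)z\|_P}{\|z\|_P}\le\lambda_n-\mu$, giving $h(\mu)\le\lambda_n/2$; nonnegativity of $h(\mu)$ is clear. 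This is the mechanism behind the first inequality of Corollary~\ref{pual}, now propagated along the iteration, and it also bounds the limit.

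For the right inequality I would first observe that $\alpha_1=2h(\mu_0)=\sqrt{c^2+(r-\mu_0)^2}+\mu_0\ge\operatorname{sign}(r-\mu_0)\sqrt{c^2+(r-\mu_0)^2}+\mu_0={\rm oq}_{M-\mu_0 N,N}(x)+\mu_0$, so it is enough to show that $(\mu_k)_{k\ge1}$ is nondecreasing. This is a standard fixed-point argument: $h$ is strictly increasing since $h'(\mu)=\tfrac12\bigl(1-\tfrac{r-\mu}{\sqrt{c^2+(r-\mu)^2}}\bigr)\in(0,1)$; its unique fixed point is $\mu^\star=\tfrac{c^2+r^2}{2r}=\tfrac{{\rm oq}_{M,N}(x)^2}{2\,{\rm rq}_{M,N}(x)}$; one has $h(\mu)\ge\mu$ exactly for $\mu\le\mu^\star$; and $\mu_0=\tfrac12\sqrt{c^2+r^2}\le\mu^\star$, equivalently $r\le\sqrt{c^2+r^2}$. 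An immediate induction then gives $\mu_0\le\mu_1\le\dots\le\mu^\star$, so $\alpha_1\le\alpha_k\le 2\mu^\star\le\lambda_n$ for every $k\ge1$, and the same holds for $\lim_k\alpha_k=\tfrac{{\rm oq}_{M,N}(x)^2}{{\rm rq}_{M,N}(x)}$. Combining the two parts proves the claim for $\alpha$ equal to any value produced in the loop, or to that limit.

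Nothing here is deep. The two things to be careful about are the bookkeeping for ${\rm oq}_{M-\mu_0 N,N}(x)+\mu_0$ when $\mu_0\ge{\rm rq}_{M,N}(x)$ — in that case it is no longer $2h(\mu_0)=\alpha_1$ but the \emph{smaller} of the two quantities, so the inequality still points the right way and no real case split is needed — and the bare identification that lines 5--6 of Algorithm~\ref{alg:pyor} implement exactly the map $h$ with the initialization $\mu_0=h(0)$; once that is seen, the remainder is the elementary calculus of a monotone scalar contraction.
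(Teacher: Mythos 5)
Your proposal is correct. Note that the paper itself states Corollary~\ref{paras} without proof, presenting it as an immediate consequence of iterating Corollary~\ref{pual} (the remark preceding Algorithm~\ref{alg:pyor} that each pass yields an improved midpoint estimate, so ``the construction can be iteratively repeated''); what you have done is make that implicit induction rigorous by recasting lines 5--6 of Algorithm~\ref{alg:pyor} as the scalar map $h(\mu)=\tfrac12\bigl(\sqrt{c^2+(r-\mu)^2}+\mu\bigr)$ with $r={\rm rq}_{M,N}(x)$, $c=\|Mx-rNx\|_P/\|Nx\|_P$. The two ingredients match the paper's intended mechanism: the invariance of $[0,\lambda_n/2]$ under $h$ is exactly the norm bound $\|(M-\mu N)N^{-1}\|_P=\lambda_n-\mu$ used for the first inequality of Corollary~\ref{pual} (this is where positive semi-definiteness enters), and the monotone increase of the iterates is the quantitative form of ``each pass improves the estimate.'' Your treatment adds two things the paper leaves unsaid: the explicit handling of the sign factor when $\mu_0\geq r$, so that $\alpha_1\geq {\rm oq}_{M-\mu_0N,N}(x)+\mu_0$ holds in both cases (this bookkeeping is genuinely needed for the right-hand inequality and is done correctly), and the identification of the fixed point $\mu^\star=\tfrac{c^2+r^2}{2r}$, hence of the limiting estimate $2\mu^\star={\rm oq}_{M,N}(x)^2/{\rm rq}_{M,N}(x)\leq\lambda_n$, which sharpens the corollary by describing what Algorithm~\ref{alg:pyor} converges to. The only implicit hypotheses you should flag are that $(Mx,Nx)_P>0$ (automatic under positive semi-definiteness unless $Mx=0$, i.e.\ $x$ is an eigenvector, which you dispose of separately) so that line 3 of the algorithm indeed equals $\tfrac12{\rm oq}_{M,N}(x)$, and $\mu_0\neq r$ so that the quotient on the right-hand side is defined; with those noted, the argument is complete.
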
 
 
 Clearly, the quality of the midpoint estimation
with Algorithm \ref{alg:pyor}
depends on how close the left end of the spectrum is to the origin.
This is illustrated in Examples \ref{kungfood} and \ref{ubo} below. 

\smallskip

\begin{example}\label{kungfood} For this tiny but educative standard Hermitian eigenvalue problem, see \cite[Example 27.1]{TRB}. It was also treated in \cite[Example 3.2]{HUKO} when comparing
the quotient iterations  with $\mu =0$ and $\mu =\infty$. That is, 
$$M=\left[ \begin{array}{ccc}
2&1&1\\ 1&3&1\\
1&1&4
\end{array} \right]$$
and $N=I$.  We have $P=I$.
Gershgorin discs reveal that $M$ is positive semi-definite.
The used vector 
$x=\left[ \begin{array}{ccc}
1&1&1
\end{array} \right]^T/\sqrt{3}$ is aimed at approximating the largest eigenvalue.  This means taking $\mu < {\rm rq}_{M,I}(x)=5$. 
There is a unique $\mu$ giving the largest eigenvalue; see
Figure \ref{fig:quotifu}. 
Since $M$ is positive semi-definite, take either $\frac{1}{2}{\rm rq}_{M,I}(x)=2.5$ 
or $\frac{1}{2}{\rm oq}_{M,I}(x)=2.533$ to approximate the midpoint; see
Corollary \ref{pual}. Then we obtain the estimate 
${\rm oq}_{M- 2.533I,I}(x)+2.533=5.1316$.
Algorithm  \ref{alg:pyor} improves this by giving an estimate $5.1333$.
These should be compared against $\Lambda(M,I)=\{1.3249,\,  2.4608,\, 5.2143\}$ rounded to $5$ digits.
\end{example}

\smallskip 

\begin{figure}[t]
\centering
\caption{The graph of the quotient function  
of Example \ref{kungfood}.
The eigenvalues of $M$ are depicted vertically with 'x' on the $y$-axis. The Rayleigh quotient is $5$. With Algorithm 1 we attain $5.1333$.
The 
value of the quotient function with $\mu$
 being the exact midpoint \eqref{optikko} is $5.183$ and is
 depicted with 'o'. }
\label{fig:quotifu}
\end{figure}




A very common problem is to estimate the smallest eigenvalue of a very large positive definite eigenvalue problem. There are two options to apply Algorithm \ref{alg:pyor}.
The simplest option,  requiring  no additional information,  
is to proceed similarly after interchanging the roles of $M$ and $N$. For this, see \eqref{sjwe} below.  The second option is to 
consider
\begin{equation}\label{ekava}
-(M-rN)x=\lambda Nx,
\end{equation}
where $r>0$ is such that the largest eigenvalue of this reformulation corresponds
to the nearest eigenvalue of the original eigenvalue problem to the origin.  
Then some additional information is required in choosing $r$.

Suppose now that an eigenvalue of a self-adjoint eigenvalue problem \eqref{gen}, which is not necessarily extreme, is to be estimated near a given point $\zeta \in \setR$. 
(For problems of finding eigenvalues inside a gap in quantum physics, see 
\cite{HIKR} and, in particular, \cite[Chapter 2]{ELS} and \cite{ES} for a concise review
of approaches.) 
To this end, consider the reformulated eigenvalue problem
\begin{equation}\label{lahi}
Nx=\lambda (M-\zeta N)x.
\end{equation} 
Now the situation is about estimating the largest eigenvalues in absolute value of this 
reformulated eigenvalue problem since
\begin{equation}\label{sjwe}
\lambda\, \mbox{ is an eigenvalue of \eqref{lahi} if and only if}\; 
\frac{1}{\lambda}+\zeta\, \mbox{ is an 
eigenvalue of \eqref{gen}}.
\end{equation}
Thus, with  $\lambda$ as large as possible in absolute value we obtain estimates as near to $\zeta$ as possible
for the original eigenvalue problem \eqref{gen}.
  This converts into applying Corollary \ref{siirto} for producing good estimates. That is, now a midpoint estimate 
$\mu$ must be produced for the spectrum of \eqref{lahi}.
  Once $\mu$ has been set, one can execute the corresponding optimal quotient iteration; see Algorithm  \ref{alg:factspar}.
Thus, computationally our approach makes no distinction between estimating extreme and interior eigenvalues.

To estimate the smallest positive eigenvalue of a positive definite problem, one simply sets $\zeta=0$ 
in \eqref{lahi}
if no additional information is available. This means interchanging the roles of $M$ and $N$.
This reformulation can be used with Corollary \ref{pual}  and \ref{paras} 
in the unbounded case as well; see Appendix B.
 For an illustration, suppose $M$ is a  self-adjoint unbounded positive definite operator 
on a Hilbert space 
and one is interested in estimating its smallest eigenvalues. 
This is a classical and  very important problem \cite{KATO}.
Besides \eqref{sqr}, there are three other ways to produce estimates.
Reformulate this standard eigenvalue problem such that \eqref{lahi} reads
\begin{equation}\label{ree}
x=\lambda Mx,
\end{equation}
so that that one is interested in the largest eigenvalues.
Assume $x$ is a trial vector. 
Now 
${\rm rq}_{I,M}(x)=\frac{(x,Mx)}{\|Mx\|^2}$ 
which is suggested in \cite{HIKR}. Thus, by \eqref{sjwe} we obtain
$\frac{1}{{\rm rq}_{I,M}(x)}=\frac{\|Mx\|^2}{(x,Mx)}$ to approximate the
smallest eigenvalue  of $M$.
But this is not the best estimate in terms of $x$. We can take $\frac{1}{{\rm oq}_{I,M}(x)}$.
To improve this, 
use the readily available estimate 
$\mu =\frac{1}{2}{\rm oq}_{I,M}(x)$
for the midpoint. Then  
${\rm oq}_{I-\mu M,M}(x)+\mu$ gives a better estimate to the largest eigenvalue of \eqref{ree}; see Corollary \ref{pual}.  
To improve this further, execute  Algorithm \ref{alg:pyor}. 
Its reciprocal then gives an upper bound on the
the smallest eigenvalue of $M$. The respective quotient iteration is Algorithm \ref{alg:factsparpo}.

Let us consider a simple benchmark problem to compare these four estimates.

\smallskip

\begin{example}\label{ubo} 
Let $M=-\frac{d^2}{dt^2}$ on $L^2(0,1)$ for the 
 eigenvalue problem $Mx=\lambda x$
with the boundary conditions $x(0)=x(1)=0$. The smallest eigenvalue is known, being  
$\pi^2\approx 9.8696$. Take the trial approximate eigenvector  $x(t)=\sqrt{30}t(1-t)$. Then we have
${\rm rq}_{M,I}(x)=10$, $\frac{1}{{\rm rq}_{I,M}(x)}=12$ and
$\frac{1}{{\rm oq}_{I,M}(x)}=2 \sqrt{30}\approx 10.9545$.
Executing  Algorithm \ref{alg:pyor} gives the estimate 10 which, intriguingly, is the same as ${\rm rq}_{M,I}(x)$.
So it seems 10 is best what can be produced in terms of quotients using
this particular approximate eigenvector. 
\end{example}

\smallskip

\begin{algorithm}[t]
   \caption{Optimal quotient iteration for an eigenvector approximation associated with the smallest eigenvalue of
a positive definite self-adjoint eigenvalue problem}
   \label{alg:factsparpo} 
   \begin{algorithmic}[1]
     \State Read $n$-by-$n$ matrices $M$ and $N$ and an approximate unit 
      eigenvector $x$ and a tolerance $\epsilon$
     \While{ $\sigma_2([Mx\;Nx])>\epsilon$ }  
      \State Compute $w_1=\frac{Mx}{\|Mx\|_P}$ and  $w_2=\frac{Nx}{\|Nx\|_P}$
     \State Set $z=\frac{1}{\sqrt{2+2|(w_2,w_1)_P|}}(\frac{(w_2,w_1)_P}{
|(w_2,w_1)_P|}w_1+w_2)$
     \State Execute Algorithm \ref{alg:pyor} for the problem $Nw=\lambda Mw$ and set $l=1/\alpha$
     \State Solve $(M-lN)\hat{x}=z$ and set $x=\hat{x}/\|\hat{x}\|_P$
           \EndWhile
        \end{algorithmic}
\end{algorithm}

\section{Preconditioning and  variational principles for ge\-nerating
approximate eigenvectors}\label{Sec4}
All the preceding estimates with quotients require high quality approximate eigenvectors. 
First, to quickly attain a cubic speed of convergence and minimize the number of iterations, the starting vector needs to be good.
Second, in executing a quotient iteration such as Algorithm \ref{alg:factspar},
one must be aware that it is strongly dependent on the starting vector; see \cite[pp. 84--85]{PA} for the Rayleigh quotient iteration. 
That is, unlike in executing the power method in the standard 
eigenvalue problem for the largest eigenvalue in modulus,  a randomly chosen starting vector typically yields unpredictable convergence results. 
For an illustration of this effect, see \cite[Example 3.1.]{HUKO}. In fact, generating a starting vector
 to attain converge towards eigenvalues of interest
is arguably the most 
challenging part of the approximation process.
In the self-adjoint case we can overcome this by resorting to variational principles.

Assume the task is to compute  an approximate eigenvector associated with an eigenvalue near $\mu\in \setR$.
To this end, consider the quotient function minus $\mu$.
Then, in a self-adjoint (or normal) eigenvalue problem \eqref{gen}, let us vary $x$ by inspecting 
\begin{equation}\label{minni}
\min_{x \in \setC^n}\frac{\|(M- \mu N)x\|_P^2}{\|Nx\|_P^2}=
\min_{x \in \setC^n}
\frac{((M-\mu N)^*P(M- \mu N)x,x)}{(N^*PNx,x)}
\end{equation}
If $N$ is invertible, then this is equivalent to 
$\min_{y \in \setC^n}\frac{\|(MN^{-1}- \mu I)y\|_P^2}{\|y\|_P^2}$. 
Since $MN^{-1}$ is self-adjoint, the minimum is realized at an eigenvector corresponding to an eigenvalue nearest to $\mu$. 
And, if $N$ is singular, the same argument applies by considering 
$$\max_{x \in \setC^n}\frac{\|Nx\|_P^2}{\|(M- \mu N)x\|_P^2}$$
assuming $\mu$ is not an eigenvalue.
To approximate this eigenvector, let us derive a descent step.
Fix a starting point $x\in \setC^n$. Applying the conjugate co-gradient to \eqref{minni} gives the direction
\begin{equation}\label{des}
(M-\mu N)^*P(M-\mu N)x-\frac{\|(M- \mu N)x\|_P^2}{\|Nx\|_P^2}
N^*PNx
\end{equation}
to descend from $x$. (For taking the conjugate co-gradient
for descent, see
\cite{BRA}.)
Denote by $q_1=\frac{x}{\|x\|}$ and
$q_2$ the descent direction orthonormalized against $q_1$. 
Let $V=\smat{q_1&q_2}$.
Then
\begin{equation}\label{minnimi}
\min_{v \in \setC^2}\frac{\|(M- \mu N)Vv\|_P^2}{\|NVv\|_P^2} 
\end{equation}
approximates \eqref{minni} and
can be solved in terms of the small eigenvalue problem 
\begin{equation}\label{apupa}
V^*(M-\mu N)^*P(M-\mu N)Vv=\lambda 
V^*N^*PNVv
\end{equation}
by finding an eigenvector $v$ corresponding to its smallest eigenvalue.
This gives a new starting point $x=Vv$ to descend from into the direction  \eqref{des}. Strictly interpreted, this is an improved descent step by the fact that the new starting point is an optimal linear combination of the previous starting point $x$ and \eqref{des}. At this point also $\mu$ can be up-dated, by computing an appropriate quotient using this new starting point $x$.
 
Based on the convergence of the power method, a problem with the descent direction \eqref{des} is that
multiplications with $(M-\mu N)^*P(M-\mu N)$ will emphasize 
directions associated with large eigenvalues. To reduce this effect, 
a more rapid descent can be expected to require 
preconditioning. To this end, take any invertible matrix $Z\in\setC^{n \times n}$ and consider
\begin{equation}\label{pre}
(M-\mu N)Zx=\lambda NZx
\end{equation}
by the fact that this equivalent eigenvalue problem is self-adjoint if and only if  \eqref{gen} is. That is,
$Z^*N^*PMZ$  is a Hermitian matrix if and only if $N^*PM$ is.
As self-adjointness is preserved 
in any left preconditioninng, this is a huge relaxation compared with what is usually suggested \cite[p. 108]{AK} 
Therefore also variational principles 
can be applied to this equivalent eigenvalue problem; see Algorithm \ref{alg:dese}.
A natural option is to take $Z$ 
to be an approximation to $(M-\mu N)^{-1}$.
To see its effect, consider the following model case, i.e., the standard Hermitian eigenvalue problem.

\smallskip 

\begin{example}\label{lap} Let $N=I$ and $\mu=0$, so that the task is to estimate eigenvalues of a Hermitian matrix $M$ near the origin. We may take
$P=I$.	
If $Z=M^{-1}$, then preconditioning transforms
the eigenvalue problem into $x=\lambda M^{-1}x$. For this problem, 
\eqref{minni}
reads $\min_{x \in \setC^n}\frac{\|x\|^2}{\|M^{-1}x\|^2}$. Taking now the descent direction, 
the column space of $V$ is ${\rm span}\{x,M^{-2}x\}$, containing vectors
power iterated with $M^{-1}$. 
This has the desired effect of emphasizing eigenvectors corresponding
to eigenvalues near the origin.
In practice $Z$ is an approximation to $M^{-1}$, so that  we are transforming the standard eigenvalue problem $Mx=\lambda x$ 
into a generalized eigenvalue problem $MZx=\lambda Zx$. 
Then \eqref{minni} reads
$\min_{x \in \setC^n}\frac{\|MZx\|^2}{\|Zx\|^2}.$
\end{example}

\smallskip

In the preceding example, if $M$ is indefinite, i.e., an interior eigenvalue is being searched, then indefinite preconditioning techniques for generating $Z$ need to be invoked. 
In particular, $Z$ can be a very rough estimate of the inverse since
\eqref{pre} is in any case equivalent to the original eigenvalue problem. 
Observe that this precondition strategy transforms
a standard eigenvalue problem into a generalized eigenvalue problem by the fact that now no distinction is made between these problems. This notably differs from the usual approach; see \cite[Section 8.3]{SAAD}.

\begin{algorithm}[t]
   \caption{Preconditioned descent method for
   approximating an eigenvector}
   \label{alg:dese} 
   \begin{algorithmic}[1]
     \State Read $n$-by-$n$ matrices $M$ and $N$, approximate unit 
      eigenvector $x$, point $\mu\in \setR$ and preconditioner $Z$
      \State Denote $\hat{M}=(M- \mu N)Z$ and $\hat{N}=NZ$
      \For{\texttt{until convergence}}      
      \State Orthogonalize $\hat{M}^*P\hat{M}x-
      \frac{\|\hat{M}x\|_P^2}{\|\hat{N}x\|_P^2}\hat{N}^*P\hat{N}x$ against $x$ to have $\hat{x}$ 
\State  Set $V=\smat{x&\hat{x}}$   
      \State Solve $V^*\hat{M}^*P\hat{M}Vv=\lambda 
V^*\hat{N}^*P\hat{N}Vv$ for the eigevector $v$ corresponding to the
smallest eigevalue
\State Set $x=Vv$
        \EndFor
        \State Set $x:=Zx$   
        \end{algorithmic}
\end{algorithm}
 
Algorithm \ref{alg:dese} requires the least amount of storage. If we collect all the generated vectors, after orthogonalization, into 
$V_k=\smat{q_1&q_2&\cdots&q_k}$, then we are concerned with
\begin{equation}\label{minnimimo}
\min_{v \in \setC^k}\frac{\|(M- \mu N)Vv\|_P^2}{\|NVv\|_P^2}. 
\end{equation}
This can be solved accordingly, with increased storage requirements though.
This can be classified as a preconditioned folded spectrum method \cite{HUHNEV} for self-adjoint generalized eigenvalue problems . We have the following for Example \ref{lap}.

\begin{theorem} Assume $M$ is invertible and $N=I$.  If $\mu=0$ and $Z=M^{-1}$, then 
the column space  of $V=\smat{q_1&q_2&\cdots&q_k}$ generated by the descend method equals 
 $${\rm span}\{x,M^{-2}x,M^{-4}x, \ldots, M^{-2k}x \}.$$
\end{theorem}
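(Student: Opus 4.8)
The plan is to prove the statement by induction on $k$, tracking precisely which vectors the descent method produces at each step when $N=I$, $\mu=0$, and $Z=M^{-1}$. In this setting the preconditioned matrices are $\hat{M}=(M-\mu N)Z=MM^{-1}=I$ and $\hat{N}=NZ=M^{-1}$, and we take $P=I$. So the functional being minimized at stage $j$ is $v\mapsto \|V_jv\|^2/\|M^{-1}V_jv\|^2$, and the descent direction at a point $x$ is, by \eqref{des} with $\hat M=I$ and $\hat N=M^{-1}$,
\begin{equation*}
\hat{M}^*\hat{M}x-\frac{\|\hat{M}x\|^2}{\|\hat{N}x\|^2}\hat{N}^*\hat{N}x
= x-\frac{\|x\|^2}{\|M^{-1}x\|^2}M^{-2}x.
\end{equation*}
The key observation is that after orthogonalizing this against the current column space, the \emph{only} genuinely new vector it contributes is $M^{-2}x$, since $x$ already lies in the span. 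Thus, when $x=V_jv\in{\rm span}\{x_0,M^{-2}x_0,\ldots,M^{-2j}x_0\}$, the new direction lies in ${\rm span}\{x_0,M^{-2}x_0,\ldots,M^{-2j}x_0,M^{-2(j+1)}x_0\}$.

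First I would set up the base case: at $k=1$ (after the first descent step, having started from $q_1=x_0/\|x_0\|$), the orthogonalized direction $\hat x$ spans, together with $q_1$, exactly ${\rm span}\{x_0, M^{-2}x_0\}$, because the descent direction is a linear combination of $x_0$ and $M^{-2}x_0$ with a nonzero $M^{-2}x_0$-coefficient (nonzero unless $x_0$ is already an eigenvector, the trivial case). Next, for the inductive step, I would assume the column space of $V_k$ equals ${\rm span}\{x_0,M^{-2}x_0,\ldots,M^{-2k}x_0\}$, let $x=V_kv$ be the optimal linear combination, so $x=\sum_{i=0}^k c_i M^{-2i}x_0$, and then compute $M^{-2}x=\sum_{i=0}^k c_i M^{-2(i+1)}x_0$, which lies in ${\rm span}\{x_0,M^{-2}x_0,\ldots,M^{-2(k+1)}x_0\}$. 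The descent direction $x-\text{(scalar)}\,M^{-2}x$ therefore lies in this same span, and after orthogonalization against $V_k$ the new column $q_{k+1}$ lies in it too; hence the column space of $V_{k+1}$ is contained in ${\rm span}\{x_0,\ldots,M^{-2(k+1)}x_0\}$.

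To get equality rather than just containment, I would argue that the top coefficient $c_k$ in the optimal combination $x=V_kv$ is nonzero (otherwise $x$ would lie in a proper invariant-type subspace inconsistent with $x$ being a strict minimizer of the functional on the larger space, unless the process has already terminated at an eigenvector), so that $M^{-2}x$ genuinely has a $M^{-2(k+1)}x_0$-component, which survives orthogonalization and forces $q_{k+1}\notin{\rm span}\{x_0,\ldots,M^{-2k}x_0\}$. Combined with the dimension count (each $q_{k+1}$ adds exactly one dimension), this yields equality. I would state explicitly the standing assumption that $x_0$ is not an eigenvector of $M$ and that no degeneracy causes premature termination, since otherwise the span collapses.

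The main obstacle is the genericity/nondegeneracy bookkeeping: establishing rigorously that the top coefficient $c_k$ does not vanish and that the newly produced direction is not accidentally absorbed into the existing span. One clean way around this is to phrase the theorem so that it asserts the column space is \emph{contained in} (or, in the generic case, equal to) the stated Krylov-type space, or to add the hypothesis that $M$ has distinct eigenvalues and $x_0$ has nonzero components along all eigenvectors; then a standard Krylov-subspace nondegeneracy argument applies verbatim. The rest of the proof is the routine induction sketched above, using only the formula \eqref{des} for the descent direction specialized to $\hat M=I$, $\hat N=M^{-1}$.
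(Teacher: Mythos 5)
Your proposal is correct and follows essentially the same route as the paper: specialize the descent direction \eqref{des} to $\hat M=I$, $\hat N=M^{-1}$ so that each step appends $M^{-2}$ applied to the current iterate, and induct on the number of steps. The nondegeneracy you flag as the main obstacle is exactly the point the paper settles in one line, without genericity hypotheses: at step $j$ the new iterate is the minimizing eigenvector of the projected problem $V_j^*(I-\lambda M^{-2})V_j v=0$, and it cannot lie in the span of the first $j-1$ columns because the conjugate cogradient at step $j-1$ was nonzero (the previous iterate was the minimizer over the old span but not a stationary point, so the minimizer over the enlarged span must leave the old span); thus the only standing assumption needed is that the iteration has not yet terminated at an eigenvector, which is implicit in running the method for $k$ steps.
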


\begin{proof}
For a descent direction, at each step $j$ we have to solve
an eigenvalue problem
$$V_j^*(I-\lambda M^{-2})V_j=0$$
for the smallest eigenvalue. The corresponding eigenvector cannot be a linear combination of $j-1$ columns of $V_j$ by the fact that at the 
$(j-1)$th step  the conjugate cogradient was nonzero. 
\end{proof}




\section{Estimating several eigenvalues}\label{Sec5}
In practice there are, typically, two types of  large scale eigenvalue problems.
One consists of finding a small number of eigenvalues and eigenvectors 
from the left (or right) end of the spectrum. The other is that of finding
a small number of eigenvalues and eigenvectors inside a gap. These problems require having tools to compute eigenvectors one by one and simultaneously avoid repeated convergence to an eigenvalue already computed.
In the self-adjoint the case, orthogonality of the associated eigenvectors can be used to achieve this.

\begin{theorem}
  Assume \eqref{gen} is self-adjoint and $\mu \in \setR$. If $x_1$ and $x_2$ are
  two eigenvectors associated with different eigenvalues,
  then
\begin{equation}\label{ehtosa}
  ((M-\mu N)x_1,(M-\mu N)x_2)_P=0.
  \end{equation}
\end{theorem}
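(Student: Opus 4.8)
The plan is to reduce the statement to the familiar fact that eigenvectors of a self-adjoint operator corresponding to distinct eigenvalues are orthogonal in the ambient inner product, only here the relevant operator is $M-\mu N$ composed appropriately with $N^{-1}$, and the ambient inner product is $(\cdot,\cdot)_P$. First I would dispose of the degenerate case: if $N$ is singular, pass to $N_\epsilon = N + \epsilon M$ for small $\epsilon>0$ so that $N_\epsilon$ is invertible and $(M-\mu N_\epsilon)$ still self-adjoint in the sense of \eqref{sadjo} for small $\epsilon$; establish the identity there and take the limit $\epsilon\to 0$, exactly as in the proof of Theorem \ref{lepo}. Also note that if the two eigenvalues $\lambda_1,\lambda_2$ of \eqref{gen} are distinct, then so are $\lambda_1-\mu$ and $\lambda_2-\mu$, which are the eigenvalues of $(M-\mu N)x = (\lambda-\mu)Nx$.

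With $N$ invertible, set $A = MN^{-1}$, which is self-adjoint with respect to $(\cdot,\cdot)_P$ because \eqref{gen} is self-adjoint. For $j=1,2$ write $z_j = Nx_j$, so that $(A-\mu I)z_j = (M-\mu N)x_j = (\lambda_j-\mu)z_j$; thus $z_1,z_2$ are eigenvectors of the $P$-self-adjoint operator $A-\mu I$ for the distinct eigenvalues $\lambda_1-\mu \ne \lambda_2-\mu$. The standard argument gives $(z_1,z_2)_P = 0$: compute $(\lambda_1-\mu)(z_1,z_2)_P = ((A-\mu I)z_1, z_2)_P = (z_1,(A-\mu I)z_2)_P = (\overline{\lambda_2-\mu})(z_1,z_2)_P$, and since $\lambda_1-\mu,\lambda_2-\mu$ are real (self-adjointness forces real spectrum here) and unequal, $(z_1,z_2)_P=0$. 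But $(M-\mu N)x_1 = (\lambda_1-\mu)z_1$ and $(M-\mu N)x_2 = (\lambda_2-\mu)z_2$, so $((M-\mu N)x_1,(M-\mu N)x_2)_P = (\lambda_1-\mu)(\lambda_2-\mu)(z_1,z_2)_P = 0$, which is \eqref{ehtosa}.

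I do not anticipate a serious obstacle here; the only delicate points are bookkeeping ones. One is making sure the reality of the spectrum of $A-\mu I$ (equivalently of $A=MN^{-1}$) is actually invoked, not assumed — it follows from self-adjointness of $A$ with respect to a genuine (positive definite) inner product $(\cdot,\cdot)_P$, so one should cite that $N^*PM$ Hermitian implies $MN^{-1}$ is $P$-self-adjoint and hence has real eigenvalues. The other is the singular-$N$ limiting argument: one must check that the perturbed pencil $M-\mu N_\epsilon$ remains self-adjoint with respect to the \emph{same} $P$, which it does not in general — so instead I would argue directly at the level of eigenvectors, approximating $x_1,x_2$ and the eigenvalues by the nearby data of the perturbed invertible pencil and using continuity, or simply observe that \eqref{ehtosa} only needs the identity $((M-\mu N)x_1,(M-\mu N)x_2)_P = ((M-\mu N)x_1, Nx_2)_P(\lambda_2-\mu)$ together with Hermiticity of $N^*PM$ and $N^*PN$, which can be expanded without inverting $N$ at all. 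In fact the cleanest route is the last one: expand $((M-\mu N)x_1,(M-\mu N)x_2)_P$ bilinearly, use $Mx_j = \lambda_j Nx_j$ to replace every $M$, and collect terms; the Hermitian symmetry of $N^*PN$ and the reality of the $\lambda_j$ make the cross terms cancel, giving $0$ directly and uniformly in whether $N$ is invertible.
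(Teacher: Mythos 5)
Your main argument (for invertible $N$) is correct and is a genuinely different packaging than the paper's: you pass to $z_j=Nx_j$ and the $P$-self-adjoint $A=MN^{-1}$, invoke the standard orthogonality $(z_1,z_2)_P=0$ for distinct real eigenvalues, and finish via $((M-\mu N)x_1,(M-\mu N)x_2)_P=(\lambda_1-\mu)(\lambda_2-\mu)(Nx_1,Nx_2)_P$. The paper never forms $N^{-1}$: it takes the inner product of $(M-\mu N)x_j=\lambda_j Nx_j$ with $(M-\mu N)x_k$, uses that $(M-\mu N)^*PN$ is Hermitian, and subtracts, so singular $N$ is covered automatically. Since the theorem does not assume $N$ invertible (and the paper's first experiment has a singular $N$), the weight of your proposal falls on how you handle that case, and there the write-up has concrete problems.

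Your ``cleanest route'' does not work as described: expanding bilinearly and substituting $Mx_j=\lambda_j Nx_j$ yields exactly $(\lambda_1-\mu)(\lambda_2-\mu)(Nx_1,Nx_2)_P$; no cross terms cancel, and the Hermiticity of $N^*PN$ is automatic and carries no information. The remaining content is precisely the claim $(Nx_1,Nx_2)_P=0$, and that needs the Hermiticity of $N^*PM$ (equivalently $M^*PN=N^*PM$): it gives $\lambda_1(Nx_1,Nx_2)_P=(Mx_1,Nx_2)_P=(Nx_1,Mx_2)_P=\lambda_2(Nx_1,Nx_2)_P$, whence $(Nx_1,Nx_2)_P=0$ because $\lambda_1\neq\lambda_2$ are real; written out, this is the paper's proof. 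Separately, your reason for discarding the perturbation $N_\epsilon=N+\epsilon M$ is mistaken: $N_\epsilon^*PM=N^*PM+\epsilon M^*PM$ is Hermitian, so the perturbed pencil is self-adjoint with respect to the same $P$, and each $x_j$ remains an eigenvector of $(M,N_\epsilon)$ with eigenvalue $\lambda_j/(1+\epsilon\lambda_j)$, so no eigenvector-continuity argument is needed --- though the direct computation above makes any perturbation unnecessary.
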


\begin{proof}
For $j=1,2$, consider
 $(M-\mu N)x_j=\lambda_j Nx_j.$
This gives, after taking the inner-product with $(M-\mu N)x_k$ with
$k=2,1$, 
$$((M-\mu N)x_1,(M-\mu N)x_2)_P=\lambda_1( Nx_1,(M- \mu N)x_2)_P$$
and 
$$((M-\mu N)x_2,(M-\mu N)x_1)_P=\lambda_2( Nx_2,(M- \mu N)x_1)_P.$$
Since $(M- \mu N)^*PN$ is a Hermitian matrix, we have  
$( Nx_1,(M- \mu N)x_2)_P=( Nx_2,(M- \mu N)x_1)_P$.
This gives, after subtracting, the claim as $\lambda_1 \not=\lambda_2$.
\end{proof}

For computations this means the following.
Suppose one eigenvalue and an associated 
eigenvector $x_1$ has been found. To look for another, an approximate
eigenvector $x$ aimed at finding another eigenvalue  should be taken to
satisfy the orthogonality condition \eqref{ehtosa}.
This means replacing $x$ with 
\begin{equation}\label{ortoehto}
x-\frac{((M- \mu N)x,(M-\mu N)x_1)_P}{\| (M- \mu N)x_1\|_P^2} x_1
\end{equation}
during the computational process.
This can be repeated, i.e.,
always imposing this orthogonality condition
against the eigenvectors found so far allows finding eigenvalues one by one.

\section{Numerical experiments}\label{Sec6}
Next two numerical experiments 
are performed. The task is to 
approximate a few  eigenpairs of a self-adjoint eigenvalue problem 
$$Mx=\lambda Nx$$
near a given $\mu\in \setR$.  Both of the experiments are realistic and challenging.

To sum up, the steps required are as follows.\\
1. {\bf Preparatory steps:}\\
 $ \mbox{ }\;  \bullet$ Generate matrices $M$ and $N$.\\
 $ \mbox{ }\; \bullet$ Construct a self-adjoining inner product $(\cdot,\cdot)_P$ and a rapid algorithm to apply $P$.\\
$ \mbox{ }\;  \bullet$ Choose either \eqref{gen} or 
\eqref{lahi} so as to approximate an extreme eigenvalue.\\
2. {\bf Variational steps for an approximate eigenvector:}\\
$ \mbox{ }\;  \bullet$ Build a preconditioner $Z$ approximating $(M-\mu N)^{-1}$.\\
$ \mbox{ }\;  \bullet$ Execute Algorithm \ref{alg:dese}.\\
3. {\bf Execution of an optimal quotient iteration:}\\
$ \mbox{ }\;  \bullet$ Provide an estimate for the midpoint of the spectrum.\\
$ \mbox{ }\;  \bullet$ Execute Algorithm \ref{alg:factspar} or \ref{alg:factsparpo}.

Some remarks are in order. In Step 1, there are degrees of freedom in choosing $P$. The choice affects the convergence. Step 2 is absolutely critical for a correct convergence. Step 3 may not be needed if Step 2 yields sufficiently good approximations.  This is not to be expected though. That is, the steepest descent method  convergences only linearly while the speed of convergence of quotient iteration is cubic.
The aim is that two or at most tree iterations with  Algorithm \ref{alg:factspar}  or \ref{alg:factsparpo}
are required
\smallskip

The computations were executed on Lenovo Thinkpad X13 Yoga laptop with 13th Gen Intel(R) Core(TM) i5-1335U processor and 32 GB of RAM, using Matlab version R2024a with Partial Differential Equation Toolbox version 24.1  
\smallskip

\begin{example}\label{ekaesim} 
	This benchmark\footnote{Curiously, all the experiments we have found in the litterature treat very simplified versions of this eigenvalue problem, without actually solving the original problem described in Matrix Market.} eigenvalue problem 
is a discretization of a fluid flow generalized eigenvalue problem of a dynamic analysis in structural engineering. The matrices are downloadable from the Matrix Market \cite{MATMA} with $M$ being BCSSTK13, a positive definite matrix, while $N$ is BCSSTM13, a positive semi-definite matrix. Both are sparse such that $n=2003$ is the dimension of the problem. The task is to compute the smallest eigenvalue. What makes this problem tough is the conditioning of $M$, being about $4.6*10^{10}$, and the singularity of $N$.
 
 For the inner-product, since $N$ is not invertible, we take a 
linear combination of $M$ and $N$. For the sparsest possible option, we set $P=M^{-1}$. 
To perform operations with $M^{-1}$,  a sparse Cholesky factorization with reordering implemented in Matlab is applied. 
	
	
From the matrices we may conclude that the eigenvalue problem is positive definite,  i.e., the eigenvalues are on the positive real axis, so that the task is to estimate the eigenvalue nearest to the origin. (We have no knowledge of how near to the origin the eigenvalue is.)
Being an extreme eigenvalue at the left end of the spectrum, we  choose the original formulation \eqref{gen}.

We take a random initial guess as a starting vector. With this, three iterations with Algorithm \ref{alg:dese},
using $\mu=0$ and $Z=M^{-1}$ as a preconditioner (available with no extra cost), gives an approximate eigenvalue 148.66.
(So at this point we know that the smallest eigenvalue appears to be quite large.) 
Switching to Algorithm  
\ref{alg:factsparpo} 
then gives $\lambda_1\approx 147.5340745961005$ in three iterations. See the left panel of Figure
\ref{bcsst}.

The convergence and quality of approximate eigenvectors $x$ can be assessed in terms of  the loss of linear dependency of the vectors $w_1=\frac{Mx}{\|Mx\|_P}$ and $w_2=\frac{Nx}{\|Nx\|_P}$  by monitoring the 2nd singular value  $\sigma_2([w_1\ w_2])$. 
This is depicted in the right panel of Figure \ref{bcsst}. 
For the quotient iteration the final value for this is $1.660 \cdot 10^{-11}$ while for the eigenvector produced by Matlab’s eigs-function it is  $1.2947 \cdot 10^{-9}$. So our iterations yield more accurate results. 
\end{example} 

\begin{figure}
	\centering
	\includegraphics[width=99.5mm, height=39.5mm]{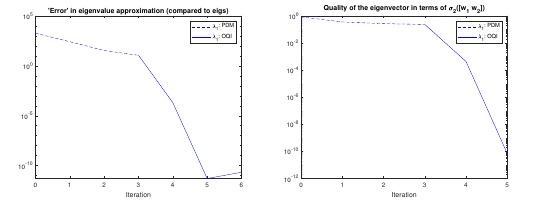}
	\caption{Convergence of the eigenpairs for the fluid flow generalized eigenvalue problem of Example \ref{ekaesim}.  First thee itrations with Algorithm \ref{alg:dese} are executed. Thereafter two iterations with Algorithm  
\ref{alg:factsparpo} 
 suffices for convergence.
		The left panel shows the difference between the eigenvalue approximation and 
the  eigenvalue computed with Matlab's eigs command. The right panel displays the respective loss of linear dependency of the vectors $w_1$ and $w_2$.}
	\label{bcsst}
\end{figure}
\smallskip

\begin{example}\label{wawee}
	This experiment is concerned with a waveguide problem treated
	in \cite{NRU}.  The Z-shaped waveguide consists of two arms of width 1 and length $R$ which are connected in right angles (see Figure \ref{Fig: Image}). Parameters $H$ and $L$ describe the horizontal and vertical dimensions of the box containing the junction of the two arms, correspondingly. 
	
	Discretized by using FEM, the matrix $M$ is Hermitian positive definite resulting
	from discretizing the Laplacian and $N$ is the mass matrix of the FEM basis used. the dimension of the problem is $n = 57509$. We are interested in finding the discrete spectrum, corresponding to the bound states, which is known to be located on the  $[0,\pi^2]$. With the chosen parameters $L=3$ and $R=15$, the discrete spectrum is known to consist of two eigenvalues \cite{NRU}. The task is to compute them. This
	problem is very tough since the second eigenvalue $\lambda_2$ is very close to the continuous
	spectrum.
	
	 Again we take $P = M^{-1}$ to have a self-adjoining inner-product. We use the formulation \eqref{gen}.
	 
	First we compute the smallest eigenvalue $\lambda_1$. Using a random starting vector  we take four iterations with Algorithm 4, using $\mu = 0$ and $Z=M^{-1}$ as a preconditioner. The sparse Cholesky factorization with reordering
	was again used for applying $M^{-1}$.  Thereafter two
	quotient iterations are needed with Algorithm  
\ref{alg:factsparpo} 
for an eigenvalue $\lambda_1=8.896137035724147$ in near full machine
	accuracy. In Figure \ref{Fig: Image} we display
	the  corresponding eigenvector.
	
	To compute the second eigenvalue $\lambda_2$,  the line 7 of Algorithm \ref{alg:dese} must be augmented
	with the orthogonality condition \eqref{ehtosa} for the approximate eigenvector $x$ to be against
	the computed eigenvector $x_1$. Its purpose is to steer the iteration away from the
	already computed eigenvalue $\lambda_1$. Since $\lambda_2$ is very close to the (discretized) continuous
	spectrum, preconditioning must be carefully devised so that switching Algorithm \ref{alg:dese}  to
	a quotient iteration results in correct convergence. The preconditioner was taken to
	be $Z=M-\mu N$ with $\mu =\lambda_1 +\epsilon$. In this example $\epsilon=10^{-7}$ was used. Since $Z$ is no longer positive definite, partially pivoted LU-decomposition with reordering is required. With this choise, four iterations with Algorithm \ref{alg:dese} are enough to ensure that Algorithm \ref{alg:factspar}  finds the correct eigenvalues with two iterations. For this last stage, $\mu=27000$ was used to approximate the mid-point of the spectrum and  $P=N^{-1}$ was used for the self-adjoining  inner-product.  
	In Figure \ref{conver}  the converge of the eigenpairs is displayed. 
	
	Let us emphasize that to be sure, with hight probability, that $\lambda_2$ gets correctly
	computed instead of computing points of the (discretized) continuous spectrum, a higher
	number of iterations with Algorithm \ref{alg:dese} were taken, although four did suffice. That is, the construction of the starting vector for a quotient iteration is a very delicate and highly critical issue for
	getting correct results.

\end{example}

\begin{figure}
\centering
\includegraphics[width=99.5mm, height=39.5mm]{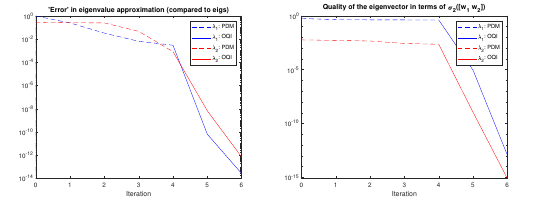}
\caption{Convergence of the eigenpairs for the waveguide problem of Example \ref{wawee}.  
First four itrations with Algorithm \ref{alg:dese} are executed. Thereafter
two iterations with Algorithm \ref{alg:factspar} are needed for  $\sigma_2([w_1,w_2])<10^{-10}$.
The left panel shows the difference between the eigenvalue approximation and 
the  eigenvalue computed with Matlab's eigs command. The right panel displays the respective loss of linear dependency of the vectors $w_1$ and $w_2$.
}
\label{conver}
\end{figure}
\begin{figure}
\centering
\includegraphics[width=99.5mm, height=39.5mm]{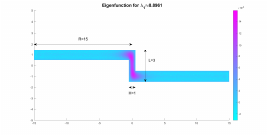}
\caption{Plot of the eigenvector inside the Z-formed  waveguide corresponding to the eigenvalue
$\lambda_1 \approx 8.8961$.}
\label{Fig: Image}
\end{figure}

\section{Conclusions} 
With an approximate eigenvector available for a generalized eigenvalue problem,
all conceivable eigenvalue estimates were given in terms of the quotient function. These estimates contain
an eigenvalue in the normal case while in the non-normal case there is no quarantee of that the estimates
are of use. A change of inner-product may be required. In the self-adjoint case rules were given for choosing
high quality
estimates. Using these estimates in eigenvector approximation yield seemingly the fastest possible quotient iterations
 to compute eigenpairs. Quotient iterations require very good starting vectors for a reliable and predictable convergence
behaviour. To this end variational descent method were devised and combined with preconditioning strategies.

\section*{Appendix A: Derivation of Rayleigh and optimal quotients}\label{secA1}
Using the inner-product $(\cdot,\cdot)_P$, assume having an approximate unit eigenvector  vector $x\in \setC^n$ 
for the linear eigenvalue problem
\eqref{gen}. To produce an eigenvalue estimate $\lambda$ 
in terms of a quotient, impose 
$$(Mx,z)_P=\lambda (Nx,z)_P,$$
  where the applied unit vector $z\in \setC^n$ should be 
  chosen with care. Since left and right eigenvectors need not  be related, the choice $z=x$ is not arguable in general. Taking  $z=\frac{Nx}{\|Nx\|_P}$
  gives the Rayleigh quotient \eqref{ray}.
  Imposing
$$\max_{z \in \setC^n,\, \|z\|_P=1} \{|(z,w_1)_P|^2+|(z,w_2)_P|^2\}$$
with $w_1=\frac{Mx}{\|Mx\|_P}$
and $w_2=\frac{Nx}{\|Nx\|_P}$ gives
the optimal quotient \eqref{qtti}. 
For a careful reasoning behind this suggestion, see  \cite[Section 2.1]{HUKO}.

\section*{Appendix B: Unbounded case}\label{App}
Assume $M$ and $N$ are densely defined linear operators in
a complex separable Hilbert space $\mathcal{H}$ equipped with an inner-product $(\cdot,\cdot)$ such that
the intersection $D(M)\cap D(N)$ of their domains 
is dense. If the quadratic form
\begin{equation}\label{cft}
  x \longmapsto (Mx,Nx),\; \mbox{for}\; x \in D(M)\cap D(N),
\end{equation}
is real valued, then the eigenvalue problem
\eqref{gen} is said to be symmetric. If, moreover, the spectrum is real, then
the eigenvalue problem is called self-adjoint.
In the self-adjoint case 
the Cayley transformation reads
\begin{equation}\label{sssseq}
U=(M+iN)(M-iN)^{-1}
\end{equation}
by being a unitary operator on $\mathcal{H}$.  
 
\begin{theorem} Assume \eqref{gen} is self-adjoint.
  Then $N(M-iN)^{-1}$ is a bounded normal operator on $\mathcal{H}$ and 
  \begin{equation}\label{sssseqe}
  \inf_x \frac{\|Mx\|}{\|Nx\|}=
  \sqrt{\frac{1}{\|N(M-iN)^{-1}\|^2}-1}.
\end{equation}
\end{theorem}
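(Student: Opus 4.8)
The plan is to run everything through the Cayley transform $U$, which turns the statement into elementary Hilbert-space arithmetic with a unitary operator.

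\emph{Step 1: express $M$ and $N$ through $U$.} Put $R=(M-iN)^{-1}$, which is everywhere defined and bounded precisely because $U$ is a (bounded, unitary) operator. From the identities $2M=(M+iN)+(M-iN)$ and $2iN=(M+iN)-(M-iN)$, holding on $D(M)\cap D(N)$, I apply $R$ on the right and use $(M-iN)R=I$ and $(M+iN)R=U$ to get
\begin{equation*}
MR=\frac{U+I}{2},\qquad N(M-iN)^{-1}=NR=\frac{U-I}{2i}.
\end{equation*}
In particular $NR$ is bounded, and it is normal, being an affine function of the normal operator $U$ (or directly $(U-I)(U-I)^{*}=2I-U-U^{*}=(U-I)^{*}(U-I)$ by unitarity). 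This settles the first assertion.

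\emph{Step 2: reduce the infimum to a scalar problem.} Each $x\in D(M)\cap D(N)$ is $x=Ry$ for a unique $y\in\mathcal H$, and $Nx=NRy=\tfrac1{2i}(U-I)y\ne 0$ exactly when $Uy\ne y$. For such $x$, using $\|Uy\|=\|y\|$,
\begin{equation*}
\frac{\|Mx\|^{2}}{\|Nx\|^{2}}=\frac{\|(U+I)y\|^{2}}{\|(U-I)y\|^{2}}=\frac{2\|y\|^{2}+2\re(Uy,y)}{2\|y\|^{2}-2\re(Uy,y)}=\frac{1+t(y)}{1-t(y)},
\end{equation*}
where $t(y)=\re(Uy,y)/\|y\|^{2}\in[-1,1)$. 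Since $s\mapsto(1+s)/(1-s)$ is increasing on $[-1,1)$, the infimum over $x$ is attained by driving $t(y)$ down to $t_{\ast}=\inf_{y\ne 0}t(y)$, the lower endpoint of the numerical range of the self-adjoint operator $\re U=\tfrac12(U+U^{*})$; hence $\inf_x\|Mx\|^{2}/\|Nx\|^{2}=(1+t_{\ast})/(1-t_{\ast})$. (The case $t_\ast=1$ forces $U=I$, $N\equiv0$, and the statement is vacuous.)

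\emph{Step 3: identify the norm and conclude.} Likewise $\|NR\|^{2}=\tfrac14\|U-I\|^{2}=\tfrac14\sup_{y\ne 0}\bigl(2-2t(y)\bigr)=\tfrac14(2-2t_{\ast})=(1-t_{\ast})/2$, so
\begin{equation*}
\frac{1}{\|N(M-iN)^{-1}\|^{2}}-1=\frac{2}{1-t_{\ast}}-1=\frac{1+t_{\ast}}{1-t_{\ast}}=\Bigl(\inf_x\frac{\|Mx\|}{\|Nx\|}\Bigr)^{2},
\end{equation*}
and taking square roots gives the claim; $t_{\ast}\ge-1$ keeps the left side nonnegative, with value $0$ exactly when $-1\in\sigma(U)$.

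\emph{Main obstacle.} The computations are routine once the set-up is right; the only delicate point is the operator-theoretic bookkeeping, namely that $R$ is bounded and everywhere defined, that $Ry\in D(M)\cap D(N)$ so $MRy$ and $NRy$ are legitimate, and that $x\mapsto Ry$ matches $\{x\in D(M)\cap D(N):Nx\ne 0\}$ with $\{y\ne 0:Uy\ne y\}$ (with the infima of $t(y)$ over all $y$ and over this restricted set agreeing whenever $t_\ast<1$). All of this is exactly what is packaged into the already-granted fact that $U$ is unitary, so the proof is short; the one genuine idea is recognizing that $\|(U+I)y\|^{2}/\|(U-I)y\|^{2}$ is a monotone function of the single scalar $t(y)$ whose extreme value is also what $\|U-I\|$ records.
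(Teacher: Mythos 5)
Your proposal is correct and follows essentially the same route as the paper: both pass to the Cayley transform $U$, identify $N(M-iN)^{-1}=\frac{1}{2i}(U-I)$, substitute $x=(M-iN)^{-1}y$, and exploit unitarity of $U$. The only cosmetic difference is that you parametrize everything by $t(y)=\re(Uy,y)/\|y\|^{2}$ and invoke monotonicity of $s\mapsto(1+s)/(1-s)$, whereas the paper divides the Pythagorean identity $\|(M+iN)x\|^{2}=\|Mx\|^{2}+\|Nx\|^{2}$ by $\|Nx\|^{2}$ directly; your expansion $\|(U+I)y\|^{2}+\|(U-I)y\|^{2}=4\|y\|^{2}$ encodes the same identity.
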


\begin{proof}
  We have a unitary $U=(M+iN)(M-iN)^{-1}=(M-iN+2iN)(M-iN)^{-1}$ and therefore
  $$N(M-iN)^{-1}=\frac{1}{2i}(U-I)$$ is a bounded operator. It is clearly
  normal as well. 
  
  For any $x \in D(M)\cap D(N)$ there holds
\begin{equation}\label{sef}
  \|(M\pm iN)x\|^2=\|Mx\|^2+\|Nx\|^2
  \end{equation}
since $(Mx,Nx)\in \setR$
  Divide both sides in \eqref{sef} by
  $\|Nx\|^2$ to have
  $\frac{\|(M+iN)x\|^2}{\|Nx\|^2}=\frac{\|Mx\|^2}{\|Nx\|^2}+1.$
  Thereby
$$1 \leq  \inf_x  \frac{\|(M+iN)x\|^2}{\|Nx\|^2}=\inf_y
  \frac{\|(M+iN)(M-iN)^{-1}y\|^2}{\|N(M-iN)^{-1}y\|^2}=
  \inf_{\|x\|=1}\frac{1}{\|N(M-iN)^{-1}x\|^2} $$
  by using $\|(M+iN)(M-iN)^{-1}y\| =\|y\|$.
\end{proof}

We have the following variational
principle for locating an eigenvalue nearest to a given point $s \in \setR$.

\begin{corollary}\label{vara}
  Assume \eqref{gen} is self-adjoint with $M$ closable and
  $\infty \in \rho (M,N)$.\footnote{The assumption $\infty \in \rho (M,N)$ means that either $N$ is a bounded invertible operator or
  $N$ with the domain $\in D(M)\cap D(M)$ is one-to-one and onto such that
  $N^{-1}$ is bounded.}
If $s\in \setR$,  then
 $$\inf_x \frac{\|(M-sN)x\|}{\|Nx\|}=\min_{\lambda\in \Lambda(M-sN,N)}|\lambda |.$$
  \end{corollary}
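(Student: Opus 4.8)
The plan is to reduce the corollary to the boundedness and normality of $N(M-sN)^{-1}$, exactly as the theorem did for $s=0$. First I would note that the eigenvalue problem $(M-sN)x=\lambda Nx$ is again self-adjoint: since $(Mx,Nx)$ is real for all $x\in D(M)\cap D(N)$ and $s\in\setR$, the form $x\mapsto((M-sN)x,Nx)=(Mx,Nx)-s\|Nx\|^2$ is real-valued, so $M-sN$ and $N$ satisfy the hypotheses of the theorem with $M$ replaced by $M-sN$. The assumption $\infty\in\rho(M,N)$ guarantees $N$ is bounded and boundedly invertible (or the analogous one-to-one-onto statement), so the Cayley transform $((M-sN)+iN)((M-sN)-iN)^{-1}$ is unitary and $N((M-sN)-iN)^{-1}$ is a bounded normal operator.

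Then I would apply the identity \eqref{sssseqe} of the preceding theorem verbatim to the pair $(M-sN,N)$:
$$\inf_x\frac{\|(M-sN)x\|}{\|Nx\|}=\sqrt{\frac{1}{\|N((M-sN)-iN)^{-1}\|^2}-1}.$$
The remaining work is to identify the right-hand side with $\min_{\lambda\in\Lambda(M-sN,N)}|\lambda|$. For a bounded normal operator $T=N((M-sN)-iN)^{-1}$ one has $\|T\|=\max_{\zeta\in\sigma(T)}|\zeta|$, and the spectral mapping from the Cayley transform relates $\sigma(T)$ to the spectrum of $(M-sN,N)$; concretely, $\lambda$ is an eigenvalue of $(M-sN,N)$ iff $\frac{1}{\lambda+i}$ (up to the bookkeeping already used in proving \eqref{sssseqe}) is a point of $\sigma(T)$, and then $\frac{1}{|T|^2}-1$ traces out $|\lambda|^2$. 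So maximizing $|\zeta|$ over $\sigma(T)$ corresponds to minimizing $|\lambda|$ over $\Lambda(M-sN,N)$, giving the claim. Because the problem is self-adjoint the spectrum is real, which is what makes this correspondence monotone and the minimum attained.

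The main obstacle, and the one step I would be careful with, is the spectral-mapping bookkeeping in the unbounded setting: one must be sure that $\sigma(N((M-sN)-iN)^{-1})$ is genuinely governed by the eigenvalues $\Lambda(M-sN,N)$ and that the supremum over the spectrum is attained (so that $\inf$ and $\min$ agree), which uses $M$ closable together with $\infty\in\rho(M,N)$ to control the closure and rule out spectrum escaping to the boundary of the disc. Once that is in place, everything else is a direct transcription of the theorem's proof with $M\rightsquigarrow M-sN$, and I would present it as such rather than re-deriving \eqref{sef}.
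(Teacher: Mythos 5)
Your skeleton is the same as the paper's: check that the shifted pencil $(M-sN,N)$ is again self-adjoint (realness of the form together with $\Lambda(M-sN,N)=\Lambda(M,N)-s\subset\setR$), apply \eqref{sssseqe} with $M$ replaced by $M-sN$, and then identify $\|N(M-sN-iN)^{-1}\|$ with $1/\sqrt{1+\min_{\lambda}|\lambda|^{2}}$. Up to that point the two arguments coincide.

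The trouble is that the step you defer -- the ``spectral-mapping bookkeeping'' -- is essentially the whole content of the corollary's proof, and the role you assign to the hypotheses is not the mechanism that actually carries it. The paper does not work on $\sigma\bigl(N(M-sN-iN)^{-1}\bigr)$ through the Cayley transform point by point; it uses $\infty\in\rho(M,N)$ (boundedness of $N^{-1}$) to rewrite $N(M-sN-iN)^{-1}=\bigl((M-sN-iN)N^{-1}\bigr)^{-1}=(MN^{-1}-sI-iI)^{-1}$, and it uses closability of $M$ to conclude that the everywhere defined symmetric operator $MN^{-1}-sI$ is a bounded self-adjoint operator on $\mathcal{H}$ -- that, and not ``ruling out spectrum escaping to the boundary of the disc,'' is where ``$M$ closable'' enters. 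Once this is established, $\|(MN^{-1}-sI-iI)^{-1}\|$ is the reciprocal of ${\rm dist}\bigl(i,\sigma(MN^{-1}-sI)\bigr)$, and since that spectrum is a nonempty compact subset of $\setR$ equal to $\Lambda(M-sN,N)=\Lambda(M,N)-s$, the distance equals $\sqrt{1+\min_{\lambda}|\lambda|^{2}}$ and is attained; this is also what settles your inf-versus-min concern, rather than normality of $T$ alone. Two smaller points in your sketch need care as well: the correspondence must be phrased for the full spectrum $\Lambda(M-sN,N)$, not only eigenvalues (in the intended applications the relevant spectrum can be continuous), and the map is $\lambda\mapsto\frac{1}{\lambda-i}$, not $\frac{1}{\lambda+i}$ -- harmless in itself, but a sign that the bookkeeping genuinely has to be carried out. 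Supply the reduction to the bounded self-adjoint operator $MN^{-1}-sI$ and your argument becomes the paper's.
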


\begin{proof}
We have $$N(M-sN-iN)^{-1}= ((M-sN-iN)N^{-1})^{-1}=
   (MN^{-1}-sI-iI)^{-1}.$$
   Now $MN^{-1}-sI-iI$ is invertible. Observe 
that $(Mx,Nx)=(MN^{-1}y,y)$, with $x=N^{-1}y$, is real.
   Since $y\in \mathcal{H}$ has no constraints and $M$ is closable, we
   can conclude $MN^{-1}-sI$ is a bounded self-adjoint operator on $\mathcal{H}$. Consequently, $MN^{-1}-sI-iI$ is normal, so that 
   $\|N(M-sN-iN)^{-1}\|$
  equals the reciprocal of the distance of $i$ to the spectrum of $MN^{-1}-sI$.
  Then use the fact that $\Lambda(MN^{-1},I) =\Lambda(M,N)$.
  Thereby the right hand-side of \eqref{sssseqe}
yields the distance.
\end{proof}

\end{document}